\DeclareMathAlphabet{\mathcal}{OMS}{cmsy}{m}{n}
\DeclareSymbolFont{largesymbols}{OMX}{cmex}{m}{n}
\newtheorem{Def}{Definition}[section]
\newtheorem{Prop}[Def]{Proposition}
\newtheorem{Theo}[Def]{Theorem}
\newtheorem{Lem}[Def]{Lemma}
\newtheorem{Koro}[Def]{Corollary}
\newcommand{\add}{\operatorname{add}\,}
\newcommand{\rad}{\operatorname{rad}\,}
\newcommand{\opp}{^{\rm op}}
\newcommand{\Imf}{{\rm Im}}
\newcommand{\gldim}{\operatorname{gl.dim}\,}
\newcommand{\pd}{{\rm pd }}
\newcommand{\id}{{\rm id}}
\newcommand{\Hom}{\operatorname{Hom}}
\newcommand{\rHom}{{\rm\bf R}{\rm Hom}}
\newcommand{\otimesL}{\otimes^{\rm\bf L}}
\newcommand{\Ext}{\operatorname{Ext}}
\newcommand{\Tor}{\operatorname{Tor}}
\newcommand{\D}[1]{\mathscr{D}(#1)}
\newcommand{\Db}[1]{ \mathscr{D}^{\rm b}(#1)}
\newcommand{\Dsg}[1]{ \mathscr{D}_{\rm sg}(#1)}
\newcommand{\C}[1]{\mathscr{C}(#1)}
\newcommand{\Kb}[1]{ \mathscr{K}^{\rm b}(#1)}
\newcommand{\modcat}[1]{#1\mbox{{\rm -mod}}}
\newcommand{\Modcat}[1]{#1\mbox{{\rm -Mod}}}
\newcommand{\stmodcat}[1]{#1\mbox{{\rm -{\underline{mod}}}}}
\newcommand{\pmodcat}[1]{#1\mbox{{\rm -proj}}}
\newcommand{\imodcat}[1]{#1\mbox{{\rm -inj}}}
\newcommand{\HH}{\rm HH}
\newcommand{\Imodcat}[1]{#1\mbox{{\rm -Inj}}}
\newcommand{\Pmodcat}[1]{#1\mbox{{\rm -Proj}}}
\newcommand{\lra}{\longrightarrow}
\newcommand{\ra}{\rightarrow}
\title{A characteristic free approach to skew-gentle algebras}
\author{Yiping Chen }
\date{}
\begin{document}
\maketitle

\renewcommand{\thefootnote}{\alph{footnote}}
\setcounter{footnote}{-1} \footnote{2010 Mathematics Subject
Classification: 16E35, 16G10; 16P10, 16P20.}
\renewcommand{\thefootnote}{\alph{footnote}}
\setcounter{footnote}{-1} \footnote{Keywords: Recollements; Gentle algebras; Skew-gentle algebras}

\begin{abstract}
To each skew-gentle algebra, one can assign a gentle algebra in terms of combinatorial data. In order to relate the structures of the two algebras, we establish a homological epimorphism and a recollement of derived module categories. This approach is characteristic free and works in particular also in characteristic two, which is the difficult case for skew-gentle algebras. This allows to solve open problems and to uniformly reprove and strengthen known results, for instance, (1) a complete classification of selfinjective skew-gentle algebras; (2) the finitistic dimension conjecture, Auslander and Reiten's conjecture, and Keller's conjecture hold for all skew-gentle algebras; (3) a precise connection of K-theory between skew-gentle algebras and gentle algebras; (4) all skew-gentle algebras are Gorenstein, and skew-gentle algebras and their gentle algebras share the same singularity categories. 
 \end{abstract}

%\tableofcontents

\section{Introduction}

Gentle algebras were first introduced in \cite{AS1987} to classify the iterated tilted algebras of type $\widetilde{\mathbb{A}}$. Gentle algebras are Gorenstein \cite{Geis2005}, and stable under derived equivalences \cite{SZ2003}. Moreover, the derived categories and the singularity categories can be described explicitly \cite{Bekkert2003, Kalck2015}.
Recently, 
gentle algebras have become increasingly important due to their close connection with Fukaya categories \cite{HKK2017}.
Skew-gentle algebras were introduced in \cite{GP1999} to study a class of tame matrix problems, and proved to be closely related with gentle algebras.

Let $k$ be an algebraically closed field, and $A$ be a finite dimensional $k$-algebra. We write $char(k)$ for the characteristic of $k$. If $char(k)$ is not $2$, then a skew-gentle algebra $A$ is Morita equivalent to a skew group algebra $\Gamma G$ with $\Gamma$ a gentle algebra and $G$ a finite group with order $2$ \cite{GP1999}.
For a skew group algebra $\Gamma G$, when the order of $G$ is invertible in $k$, skew group constructions preserve many invariants such as Gorensteinness, selfinjectivity, finitistic dimensions (\cite[Theorem 1.3]{Reiten1985b}, \cite[Proposition 3.6]{huang2012}). Hence, gentle algebras and skew-gentle algebras share some common properties when $char(k)$ is not $2$. However, for $char(k)=2$, the situation is different and less is known.

The article is devoted to identifying uniform structure of skew-gentle algebras that allows a characteristic free approach. 
Throughout this paper, we assume that $k$ is an arbitrary algebraically closed field. 
A skew-gentle algebra $A(Q, I, Sp)$ is given by 
by combinatorial data $(Q, I, Sp)$, where $Q$ is a quiver, $I$ a set of relations and $Sp$ a set of loops (Section 3). $Q$ and $I$ define a gentle algebra $A(Q, I)$.  In this paper, we compare $A(Q, I, Sp)$ with $A(Q, I)$. Our main results are stated as follows.

\begin{Theo}\label{main result} Let $A=A(Q, I, Sp)$ be a skew-gentle algebra with $Sp\neq\emptyset$. Then, 

\begin{enumerate}[label=(\alph*)]
\item
We have a recollement 
\begin{equation}\label{recollement}\xymatrix{
\D{\Modcat{A/AeA}}\ar[rr]^{i_*=i_!}&&\D{\Modcat{A}}\ar@/^1pc/[ll]^{i^!}\ar@/_1pc/[ll]_{i^*}\ar[rr]^{j^!=j^*}&&\D{\Modcat{eAe}}\ar@/_1pc/[ll]_{j_!}\ar@/^1pc/[ll]^{j_*}},\end{equation}
where $$i^*=A/AeA\otimesL_A-, \; i_*=A/AeA\otimesL_{A/AeA}-, i^!=\rHom_A(A/AeA, -),$$
$$j_!=Ae\otimesL_{eAe}-, \;j^!=eA\otimesL_A-,\; j_*=\rHom_{eAe}(eA, -).$$
\item
The algebra $A/AeA$ is isomorphic to the gentle algebra $A(Q, I)$. 
\item 
The indecomposable direct factors of $eAe$ are hereditary algebras of linearly oriented quivers of type $\mathbb{A}_n$ for $n\geqslant 1$.

\end{enumerate}

\end{Theo}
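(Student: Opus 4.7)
The plan is to establish (b) and (c) by direct inspection of the skew-gentle presentation, and then deduce (a) from the Cline--Parshall--Scott criterion for stratifying ideals.

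For (b), I would exhibit a surjective $k$-algebra homomorphism $\pi: A(Q,I,Sp) \twoheadrightarrow A(Q,I)$ and show $\ker\pi = AE_{22}A$. Using the presentation $kQ'/\langle I\cup\{\alpha^2-\alpha\mid \alpha\in Sp\}\rangle$, I would fix every ordinary vertex and every non-special arrow, and send each special loop $\alpha$ at a special vertex $i$ to $e_i$; the relation $\alpha^2-\alpha$ maps to $e_i^2-e_i=0$ and the gentle relations in $I$ are preserved, so $\pi$ is well defined, and surjectivity is clear. Under the Peirce decomposition $e_i=e_{i^+}+e_{i^-}$ with $e_{i^+}=\alpha$, each idempotent $e_{i^-}$ lies in $\ker\pi$, giving $AE_{22}A\subseteq\ker\pi$. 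For the reverse inclusion I would compare $k$-bases, tracking how admissible paths in $Q'$ decompose along the $\pm$-splitting and observing that the $-$-decorated part generates exactly $AE_{22}A$, while $\pi$ carries the $+$-decorated part bijectively onto a standard path basis of $A(Q,I)$.

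For (c), I would read off the Gabriel quiver and relations of $E_{22}AE_{22}$ from the local structure at each special vertex. The gentle conditions imposed on $(Q',I\cup\{\alpha^2\})$ allow at most one arrow ending at $i$ (resp. starting at $i$) whose composition with the special loop $\alpha$ avoids the relations; rewriting these incident arrows through the $\pm$-splitting and discarding the compositions that fall into $I$ should yield $e_{i^-}Ae_{i^-}=ke_{i^-}$ for every special $i$ and $\dim_k e_{j^-}Ae_{i^-}\leq 1$ for distinct special $i,j$. A careful case analysis should then show that the arrows $i^-\to j^-$ assemble into a disjoint union of linearly oriented chains, with every length-two composition between $-$-vertices broken by a gentle relation in $I$. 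Hence each indecomposable direct factor of $E_{22}AE_{22}$ is either $k$ or the path algebra of a linearly oriented $\mathbb{A}_n$. This combinatorial case analysis around the special loops is the main obstacle.

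For (a), I would then invoke the Cline--Parshall--Scott criterion: a recollement of the stated form with the displayed functors exists if and only if $AE_{22}A$ is a stratifying ideal, equivalently $\Tor_n^{E_{22}AE_{22}}(AE_{22},E_{22}A)=0$ for all $n\geq 1$. By (c) the algebra $E_{22}AE_{22}$ is hereditary, so only the vanishing in degree one needs checking, and I would obtain it by showing that $AE_{22}$ is projective as a right $E_{22}AE_{22}$-module: each summand $Ae_{i^-}$ should decompose into indecomposable projective right modules over the relevant $k\mathbb{A}_n$-factors, using the explicit data produced in (c) together with the $\pm$-splitting of (b). Once $AE_{22}A$ is shown to be stratifying, the explicit formulas for $i^*, i_*, i^!$ and $j_!, j^!, j_*$ are the standard functors associated to a stratifying idempotent, which completes the theorem.
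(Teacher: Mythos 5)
Your overall strategy (establish (b) and (c) combinatorially, then derive (a) from the Koenig--Nagase criterion for a stratifying ideal by showing $AE_{22}$ is projective over $C=E_{22}AE_{22}$) is the same as the paper's, which uses the Morita-context Peirce decomposition $A=\begin{pmatrix}B&M\\N&C\end{pmatrix}$, proves $M=(1-E_{22})AE_{22}$ is a projective right $C$-module, and invokes \cite[Lemma 4.1]{Koenig2009}. Your treatment of (b) via a direct surjection $\pi:A(Q,I,Sp)\twoheadrightarrow A(Q,I)$ that sends each special loop $\alpha$ to $e_{s(\alpha)}$ is a neat variant of the paper's route (the paper instead computes $B/f(M\otimes_CN)$ explicitly), and it can be made to work, though the claim $\ker\pi=AE_{22}A$ still needs the same kind of path-splitting combinatorics that the paper packages into its Lemma on skew-gentle triples.

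The genuine gap is in (c). You assert that ``every length-two composition between $-$-vertices [is] broken by a gentle relation in $I$.'' That is false and would give the wrong answer: if such compositions vanished, $C$ would be a Nakayama algebra with radical square zero, not the hereditary path algebra $k\mathbb{A}_n$ claimed in the theorem. The correct mechanism is the opposite: the zero relations in $I^A$ occur only at \emph{ordinary} middle vertices; at a special middle vertex $j$ the relations coming from $I^{Sp}$ are the anti-commutativity relations (e.g.\ ${}^-\alpha^-\,{}^-\beta^- = -\,{}^-\alpha^+\,{}^+\beta^-$), which do \emph{not} kill the composition. Since the joining vertex of two consecutive arrows of $C$ is always a $-$-vertex, hence special, compositions in $C$ survive and $C$ carries no relations at all --- this is exactly why $C$ is hereditary. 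You should also be careful that the arrows of $Q^C$ are not only the arrows $i^-\to j^-$ already present in $Q^A$; they also include (images of) paths $i^-\to\cdots\to j^-$ in $Q^A$ that pass through a chain of \emph{ordinary} vertices, and uniqueness of such paths (a gentle-pair argument) is needed to get $\dim_k e_{j^-}Ae_{i^-}\leqslant 1$. As stated your case analysis misses both points, so (c) --- and hence the ``$C$ hereditary, only $\Tor_1$ to check'' step in (a) --- is not yet justified.
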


The strategy to prove Theorem \ref{main result} is to find a gentle algebra $B$ and a surjective ring homomorphism $\varphi:{}A\ra B$ such that 
\begin{itemize}
\item
The algebra $B$ is given, up to isomorphism, by combinatorial data $(Q, I)$ in the definition of $A$. Thus, $B$ is a gentle algebra.  
In fact, the algebra $A(Q, I, Sp)$ is a deformation of $B$ \cite[Lemma 3.4]{Amiot2022}. When the set $Sp$ is empty, the algebra $A$ coincides with $B$. 
\item
The homomorphism $\varphi$ is a homological epimorphism, that is, $\D{\Modcat{B}}\ra \D{\Modcat{A}}$ is an embedding of triangulated categories.
\item
The kernel of $\varphi$ is a stratifying ideal of $A$. More precisely, there exists an idempotent $e$ of $A$ satisfying that $Ae\otimesL_{eAe}A\ra AeA$ is an isomorphism. The algebras $A/AeA$ and $B$ are isomorphic. 

\end{itemize}

It is important to note that the gentle algebras $B=A/AeA$ in Theorem \ref{main result} is different from the gentle algebra $\Gamma$ used in \cite{GP1999}, where for $char(k)\neq 2$, it is shown that $A(Q, I, Sp)$ is Morita equivalent to a skew group algebra $\Gamma G$. $B$ and $\Gamma$ have different properties; in \cite{ChenLu2015}, it is shown that $\Gamma$ and $B$ are not singularly equivalent in general. Thus Theorem \ref{main result} may provide a new information even when $char(k)\neq 2$.

\medskip

We get two kinds of consequences directly from these recollements. The first kind of results are about the relation between $A$ and its gentle algebra $B$, which in the following also is denoted by $A(Q, I)$. 
Using the theorem as a main tool, we clarify the structural connections between the skew-gentle algebra $A(Q, I, Sp)$ and the gentle algebra $A(Q, I)$ in a uniform way. 
We first reobtain that $A$ and $B$ are singularly equivalent, and all skew-gentle algebras are Gorenstein in a uniform way which has actually been proved in \cite{ChenLu2015} and \cite{ChenLu2017} by many detailed computations. A new result strengthens $(a)$ by showing that $A$ and $B$ are singularly equivalent of Morita type with level in the sense of Wang \cite{Wang2015}. Note that a stratifying recollement of bounded derived categories of indecomposable selfinjective algebras must split \cite{CK2017}. Then,
the third new result is a complete classification of all selfinjective skew-gentle algebras. 
Another new result relates K-theories of skew-gentle algebras and gentle algebras in a precise way using Corollary 6.8 in \cite{AKLY2017} ( also see Corollary 1.3 in \cite{ChenXi2016a}).

\begin{Koro}\label{coro}
\begin{enumerate}[label=(\alph*)]
\item 
The singularity categories $\Dsg{A(Q, I)}$ and $\Dsg{A(Q, I, Sp)}$ are equivalent as triangulated categories. In particular, the global dimension of $A(Q, I, Sp)$ is finite if and only if so is $A(Q, I)$.
\item
The skew-gentle algebras $A(Q, I, Sp)$ and $A(Q, I)$ are singularly equivalent of Morita type with level. 
\item
An indecomposable skew-gentle algebra $A(Q, I, Sp)$ is selfinjective if and only if $Sp=\emptyset$ and the gentle algebra $A(Q, I)$ is selfinjective. Equivalently, $Sp=\emptyset$ and the gentle $A(Q, I)$ is simple or a selfinjective Nakayama algebra with radical square zero. 
 
\item
There are isomorphisms of K-groups $\mathbb{K}_i(A(Q, I, Sp))\cong \mathbb{K}_i(A(Q, I))\oplus (\bigoplus_{Sp}\mathbb{K}_i(k))$, where $\bigoplus_{Sp}\mathbb{K}_i(k)$ is the direct sum of as the number of the special loops in $Sp$ copies of the $i$-th K-group of $k$, for all $i\in\mathbb{N}$.

\end{enumerate}

\end{Koro}

\medskip

The second kind of results are about general properties of skew-gentle algebras. In this part, we prove that the finitistic dimension conjecture, Auslander and Reiten's conjecture and Keller's conjecture \cite{Keller2018} hold for all skew-gentle algebras. Actually, all Gorenstein algebras satisfy the finitistic dimension conjecture. Hence, the finitistic dimension conjecture also follows from Corollary \ref{coro2} $(a)$. However, Corollary \ref{coro2} $(c)$ and $(d)$ are completely new.

\begin{Koro}\label{coro2}

\begin{enumerate}[label=(\alph*)]
\item
Skew-gentle algebras are Gorenstein. 

\item
The finitistic dimension conjecture holds for all skew-gentle algebras.
\item
Auslander and Reiten's conjecture holds for all skew-gentle algebras. 
\item
Keller's conjecture holds for all skew-gentle algebras.

\end{enumerate}

\end{Koro}

Since recollements of Gorenstein algebras can be completed to a ladder of height infinity, we get Corollary \ref{coro2} $(c)$ by Theorem 1.2 in \cite{CHQW2023}.
As Keller's conjecture is invariant under singular equivalences of Morita type with level, and holds for all gentle algebras \cite{ChenLiWang2020}, we can extend this conjecture to all skew-gentle algebras as an application of the main results.

This paper is organized as follows: In Section 2, we recall some notation and definitions. In Section 3, some basic facts about gentle algebras and skew-gentle algebras will be given, including precise description by quiver and relation. And in the final Section 4, we will prove the main results and corollaries. We will construct a stratifying ideal for a skew-gentle algebra $A(Q, I, Sp)$, and prove that quotient algebra of $A$ modulo the stratifying ideal is $A(Q, I)$.

\medskip

\section{Notation and definitions}

In this section, we will fix some notation  and recall some basic definitions needed in our proofs.

Throughout this article,  $k$ denotes a fixed algebraically closed field. Let $A$ be a finite dimensional $k$-algebra. We write $A^e$ for the enveloping algebra 
$A\otimes_kA^{op}$. We denote by $\Modcat{A}$ the category
of left $A$-modules, and by $\modcat{A}$ the category of all finitely
generated left $A$-modules. The notation Mod-$A$ means the category of right
$A$-modules. $\Pmodcat{A}$ (resp. $\pmodcat{A}$) is the full subcategory of $\Modcat{A}$ (resp. $\modcat{A}$) with objects the (resp. finitely generated) projective left $A$-modules. Dually, $\Imodcat{A}$ (resp. $\imodcat{A}$) is the full subcategory of $\Modcat{A}$ (resp. $\modcat{A}$) with objects the (resp. finitely generated) injective left $A$-modules.
By $\add(M)$, we shall mean the full subcategory of
$A$-Mod, whose objects are direct summands of finite direct sums of
copies of $M$.

For two morphisms $f:{}X \ra Y$ and $g:{}Y \ra Z$ in
$\modcat{A}$, the composition of $f$ and $g$ is written as $fg$, which is
a morphism from $X$ to $Z$. However, the composition of functors is from right to left. Namely, for two functors $F:{}\mathcal{C}\ra \mathcal{D}$ and $G:{}\mathcal{D}\ra \mathcal{E}$, we write $GF$ for their composition instead of $FG$, where $\mathcal{C}, \mathcal{D}$ and $\mathcal{E}$ are categories.

 Let $A$ be a finite dimensional $k$-algebra. 
 We denote by $\C{A}$ the category of complexes of finitely generated left $A$-modules, and by $\D{\Modcat{A}}$ the derived category of $\Modcat{A}$. The notions $\Kb{A}$ and $\Db{A}$ are the bounded homotopy and the bounded derived category of $\modcat{A}$, respectively. 

\medskip

\subsection{Recollements of triangulated categories}

In \cite{Beuilinson1982}, recollements were introduced by Beilinson, Bernstein and Deligne to study perverse sheaves.  A {\em recollement} of triangulated $k$-categories is a diagram

$$\xymatrix{
\mathcal{T}'\ar[rr]^{i_*=i_!}&&\mathcal{T}\ar@/^1pc/[ll]^{i^!}\ar@/_1pc/[ll]_{i^*}\ar[rr]^{j^!=j^*}&&\mathcal{T}''\ar@/_1pc/[ll]_{j_!}\ar@/^1pc/[ll]^{j_*}}$$
of triangulated categories and triangle functors such that

$(1)$ $(i^*, i_*), (i_!, i^!), (j_!, j^!), (j^*, j_*)$ are adjoint pairs;

$(2)$ $i_*, j_*, j_!$ are full and faithful;

$(3)$ $i^!j_*=0, j^!i_!=0, i^*j_!=0$.

$(4)$ Let $M$ be an object in $\mathcal{T}$. Then there are canonical triangles

$$i_!i^!(M)\lra M\lra j_*j^*(M)\lra i_!i^!(M)[1]$$

$$j_!j^!(M)\lra M\lra i_*i^*(M)\lra j_!j^!(M)[1]$$
where the morphisms are given by adjunctions.

To construct recollements, homological epimorphisms can be used.
Recall that
a ring homomorphism $\varphi:{}A\lra B$ is a {\em homological epimorphism} if $\varphi$ is a ring epimorphism and $\Tor^A_i(B, B)=0$ for all $i>0$. 
By \cite[Theorem 4.4]{GL1991} and \cite[Section 4]{NicolasSaorin2009a}, $\varphi$ is a homological epimorphism if and only if the induced map $\Ext^i_B(M, N)\lra \Ext^i_A(\varphi^*(M), \varphi^*(N))$ is bijective for all $M, N$ and $i\geqslant 0$, equivalently, 
the induced functor $\varphi_*:{}\D{\Modcat{B}}\lra \D{\Modcat{A}}$ is fully faithful. Let $q:{} \D{\Modcat{A}}\lra \D{\Modcat{A}}/\Imf(\varphi_*)$ be the Verdier quotient of $\D{\Modcat{A}}$ modulo the essential image of $\varphi_*$. Note that $\varphi_*$ has a left adjoint $B\otimesL_A-$ and a right adjoint $\rHom_A(B, -)$. Then, by \cite[Proposition 9.1.18]{Neeman2001}, the Bousfield (co)localisation functor exists. Namely, the Verdier quotient $q$ has a left adjoint and a right adjoint. 
In this way, we get
a recollement
$$\xymatrix{
\D{\Modcat{B}}\ar[rr]^{\varphi_*}&&\D{\Modcat{A}}\ar@/^1pc/[ll]^{\rHom_A(B, -)}\ar@/_1pc/[ll]_{B\otimesL_A-}\ar[rr]^{}&&\chi\ar@/_1pc/[ll]\ar@/^1pc/[ll]}$$
where $\chi$ is a triangulated category.

Let $\varphi:{}A\lra B$ be a surjective homomorphism. Assume that $\varphi$ is a homological epimorphism with the kernel $AeA$ where $e$ is an idempotent of $A$. By \cite[Remark 2.1.2 and Definition 2.1.1]{Cline1996}, the kernel $AeA$ is a {\em stratifying ideal}, i.e., $Ae\otimesL_{eAe}eA\lra AeA$ is an isomorphism. By \cite[Example 4.5]{AKL2011}, this gives a recollement
$$\xymatrix{
\D{\Modcat{A/AeA}}\ar[rr]^{i_*}&&\D{\Modcat{A}}\ar@/^1pc/[ll]^{i^!}\ar@/_1pc/[ll]_{i^*}\ar[rr]^{j^!}&&\D{\Modcat{eAe}}\ar@/_1pc/[ll]_{j_!}\ar@/^1pc/[ll]^{j_*}}$$
where
$$i^*=A/AeA\otimesL_A-, \; i_*=A/AeA\otimesL_{A/AeA}-, i^!=\rHom_A(A/AeA, -),$$
$$j_!=Ae\otimesL_{eAe}-, \;j^!=eA\otimesL_A-,\; j_*=\rHom_{eAe}(eA, -).$$

\medskip

 Let $A$ be an algebra, and $M$ be a finitely generated left $A$-module. The finitely generated module $M$ is called {\em Gorenstein projective} if there is an exact sequence $P^{\bullet}\in \C{A}:$
$$\cdots\lra P^{-2}\lra P^{-1}\stackrel{d^{-1}}\lra P^0\stackrel{d^0}\lra P^1\lra P^2\lra \cdots$$
such that $\Hom_A(P^{\bullet}, A)$ is exact and $X\cong \Imf(d^0)$. Clearly, all finitely generated projective left $A$-modules are finitely generated Gorenstein projective. For any $M\in\modcat{A}$, we write $\id_A(M)$ for the injective dimension of $M$.
An algebra $A$ is called {\em Gorenstein} if $\id_A(A)<\infty$ and $\id_{A^{\opp}}(A)<\infty$. We write $d$ for the injective dimension of the left regular module $A$. Then, the algebra $A$ is Gorenstein if and only if $\Omega^d(M)$ is Gorenstein-projective for all $M\in \modcat{A}$, where $\Omega$ is the syzygy functor of the stable category $\stmodcat{A}$ \cite[Proposition 3.10]{Beligiannis2005}. 
It is also equivalent to saying $\Kb{\pmodcat{A}}=\Kb{\imodcat{A}}$ \cite[Section 1]{Happel1991f}.

For convenience, we write $(\D{\Modcat{B}}, \D{\Modcat{A}}, \D{\Modcat{C}})$ for the recollement
\begin{equation}\label{R}
\xymatrix{
\D{\Modcat{B}}\ar[rr]^{i_*}&&\D{\Modcat{A}}\ar@/^1pc/[ll]^{i^!}\ar@/_1pc/[ll]_{i^*}\ar[rr]^{j^!}&&\D{\Modcat{C}}.\ar@/_1pc/[ll]_{j_!}\ar@/^1pc/[ll]^{j_*}}
\end{equation}
We say that the recollement (\ref{R}) can be restricted to $\Db{\rm mod}$ and $\Kb{\rm proj}$ if $(\Db{B}, \Db{A}, \Db{C})$ and $(\Kb{B}, \Kb{A}, \Kb{C})$ are recollements with the same triangle functors in (\ref{R}). 
The following Propositions, taken from \cite{AKLY2017}, \cite{CK2017} and \cite{QinHan2016}, relate Gorensteinness and recollements.

\begin{Prop}\label{1} (\cite[Corollary 4.7]{AKLY2017}) If $\gldim{C}<\infty$, then the recollement (\ref{R}) can be restricted to $\Db{\rm mod}$.
\end{Prop}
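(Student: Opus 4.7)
The plan is to verify that each of the six functors in the recollement (\ref{R}) preserves $\Db{\rm mod}$, whereupon the defining axioms $(1)$--$(4)$ of a recollement carry over automatically to the bounded derived categories of finitely generated modules. Two of the functors are essentially cost-free: $i_*$ is exact and preserves finite generation (it is underived restriction of scalars along a surjection $A\to B$, in this setting $A\to A/AeA$), hence sends $\Db{B}$ into $\Db{A}$; and $j^!=j^*$, realized by tensoring with a bimodule like $eA$ that is finitely generated projective on the appropriate side, sends $\Db{A}$ into $\Db{C}$ with no hypothesis at all.

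The hypothesis $\gldim C<\infty$ enters decisively in controlling $j_!$ and $j_*$. For any $N\in\Db{C}$, finiteness of $\gldim C$ guarantees both a bounded projective resolution and a bounded injective resolution of $N$ by finitely generated $C$-modules. Computing $j_!(N)=Ae\otimesL_{eAe}N$ through the projective resolution then produces a bounded complex of $A$-modules with finitely generated cohomology, since $Ae$ itself is finitely generated over $A$. Dually, $j_*(N)=\rHom_{eAe}(eA,N)$ is computed through the injective resolution and remains bounded and finitely generated. With $i_*,j^!,j_!,j_*$ all established to restrict, the remaining functors $i^*$ and $i^!$ are forced to preserve $\Db{\rm mod}$ by the canonical triangles of the recollement: from $M\in\Db{A}$, the triangle $j_!j^!(M)\lra M\lra i_*i^*(M)\lra j_!j^!(M)[1]$ identifies $i_*i^*(M)$ as a cone of a morphism between objects already in $\Db{A}$, and full faithfulness of $i_*$ together with its being (underived) restriction of scalars pins down $i^*(M)\in\Db{B}$; the dual triangle $i_!i^!(M)\lra M\lra j_*j^*(M)\lra i_!i^!(M)[1]$ yields $i^!(M)\in\Db{B}$ by the symmetric argument.

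The main technical hurdle I anticipate lies in the finiteness tracking for $j_!$ and $j_*$: boundedness of the output is an immediate payoff of $\gldim C<\infty$, but one must also verify that every cohomology module of $Ae\otimesL_{eAe}N$ and $\rHom_{eAe}(eA,N)$ stays finitely generated over $A$. This reduces to $Ae$ and $eA$ being finitely generated on both sides, which is automatic for an idempotent in a finite dimensional algebra. Once this finiteness is secured, the rest of the argument is purely formal manipulation of the canonical triangles and of the adjunction data already encoded in the recollement.
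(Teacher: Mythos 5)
The paper does not actually prove this statement: Proposition~\ref{1} is imported verbatim from \cite[Corollary 4.7]{AKLY2017} with no argument given, so there is no in-house proof to compare against. What you have supplied is therefore a self-contained replacement, and it is correct for the situation this paper actually needs, namely the stratifying-ideal recollement with $B=A/AeA$ and $C=eAe$ and the explicit functors $i_*=\mbox{restriction of scalars}$, $j^!=eA\otimesL_A-$, $j_!=Ae\otimesL_{eAe}-$, $j_*=\rHom_{eAe}(eA,-)$. Your bookkeeping is sound: $i_*$ reflects as well as preserves $\Db{\rm mod}$ because a module is finitely generated over $B$ iff over $A$ when $A\to B$ is surjective, which is exactly what lets you peel $i^*(M)\in\Db{B}$ and $i^!(M)\in\Db{B}$ off the two canonical triangles once the outer four functors are under control; $j^!$ is just multiplication by $e$ and costs nothing; and $\gldim C<\infty$ is used precisely where it must be, to produce a bounded complex of finitely generated projectives (for $j_!$) or injectives (for $j_*$) computing the derived functor, with finite generation of $Ae$, $eA$ over the relevant rings closing the loop. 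One small stylistic note: for $j_*$ it is slightly cleaner to invoke the hyper-Ext spectral sequence $\Ext_{eAe}^{p}(eA,H^{q}N)\Rightarrow H^{p+q}j_*(N)$, which is bounded in $p$ by $\gldim C$ and has finite-dimensional entries, avoiding any discussion of whether the injective resolution can be chosen termwise finitely generated (it can, but the spectral sequence bypasses the point). Be aware that \cite[Corollary 4.7]{AKLY2017} is stated for an arbitrary recollement of $\D{\Modcat{-}}$-categories over finite dimensional algebras, whereas your argument leans on the concrete bimodule description of the six functors; since every recollement the paper uses is of stratifying-ideal type, this is not a gap here, but your proof does not recover the cited result in its full generality.
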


\begin{Prop}\label{2} (\cite[Proposition 3.4]{CK2017}, \cite{QinHan2016})
Assume the recollement (\ref{R}) can be restricted to $\Db{\rm mod}$. Then, the following statements are equivalent:
\begin{enumerate}[label=(\alph*)]
\item
The algebra $A$ is Gorenstein.
\item
The algebras $A$, $B$ and $C$ are Gorenstein.
\item
The algebras $B$ and $C$ are Gorenstein, and the recollement (\ref{R}) can be restricted to $\Kb{{\rm proj}}$.
\item
The algebras $B$ and $C$ are Gorenstein, and $i^!(A)\in\Kb{\pmodcat{B}}$.
\end{enumerate}
\end{Prop}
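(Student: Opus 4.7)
The plan is to use Happel's theorem characterising Gorenstein algebras $R$ by the equality $\Kb{\pmodcat{R}}=\Kb{\imodcat{R}}$ inside $\Db{R}$, together with the six-functor formalism of the recollement (\ref{R}) restricted to $\Db{\rm mod}$. The key mechanism throughout is that when a recollement of unbounded derived categories descends to a compatible recollement on a thick subcategory, stability properties transfer between the three algebras.

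First I would dispose of the formal implications: (b) $\Rightarrow$ (a) is tautological, and (c) $\Rightarrow$ (d) follows by evaluating $i^!$ at the generator $A\in\Kb{\pmodcat{A}}$. For (a) $\Rightarrow$ (b), I would transfer $\Kb{\pmodcat{A}}=\Kb{\imodcat{A}}$ to analogous equalities on $B$ and $C$. For $C$, since $j_!$ and $j_*$ are fully faithful, the restricted recollement identifies the images of $\Kb{\pmodcat{C}}$ and $\Kb{\imodcat{C}}$ with the corresponding subcategories inside $\Db{A}$, so Happel's equality on $A$ forces Happel's equality on $C$. The argument for $B$ is parallel via the triple $(i^*, i_*, i^!)$.

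The substantive content is the equivalence (b) $\Leftrightarrow$ (c) $\Leftrightarrow$ (d). For (b) $\Rightarrow$ (c), given that all three algebras are Gorenstein, Happel's equality holds on each, and the explicit formulas for $i^*, i^!, j_!, j_*$ as derived tensor products and derived homs let one verify functor by functor that each preserves $\Kb{\pmodcat{}}$, exploiting the freedom to swap projectives for injectives whenever needed. For (d) $\Rightarrow$ (c), the datum $i^!(A)\in\Kb{\pmodcat{B}}$ propagates by a thick-subcategory argument to $i^!(X)\in\Kb{\pmodcat{B}}$ for every $X\in\Kb{\pmodcat{A}}$; combined with Gorensteinness of $B$ and $C$ (which provides the Happel equality needed to re-interpret $i^*$ and $j_*$ through injectives) this yields the full $\Kb{\rm proj}$-restriction of the recollement. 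The implication (c) $\Rightarrow$ (b) is then Happel's criterion applied in reverse on each algebra, once one notes that the restricted functors certify finite projective-injective dimensions.

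The main obstacle is the direction (d) $\Rightarrow$ (c): upgrading the single datum $i^!(A)\in\Kb{\pmodcat{B}}$ to full $\Kb{\rm proj}$-compatibility of the six functors requires delicate control of $i^*$, $j_!$ and $j_*$ using Gorensteinness of $B$ and $C$. The precise bookkeeping, which repeatedly uses Happel's equality to swap between $\Kb{\pmodcat{}}$ and $\Kb{\imodcat{}}$ while traversing the recollement, is the point where genuine work beyond the formal yoga of adjoint triples is needed.
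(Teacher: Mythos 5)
You should first note that the paper does not prove Proposition \ref{2} at all: it is quoted from \cite[Proposition 3.4]{CK2017} and \cite{QinHan2016}, so the relevant comparison is with those proofs. Like them, you organize everything around Happel's criterion ($R$ is Gorenstein iff $\Kb{\pmodcat{R}}=\Kb{\imodcat{R}}$), and your formal skeleton is fine: (b)$\Rightarrow$(a) is trivial, (c)$\Rightarrow$(d) is evaluation at $A$, and the thick-subcategory argument upgrading $i^!(A)\in\Kb{\pmodcat{B}}$ to $i^!(\Kb{\pmodcat{A}})\subseteq\Kb{\pmodcat{B}}$ is correct. The genuine gap is that you present the transfer steps as formal consequences of full faithfulness and adjunction, whereas they are exactly the non-formal content of the cited results.

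Concretely, in (a)$\Rightarrow$(b) you assert that, since $j_!$ and $j_*$ are fully faithful, the images of $\Kb{\pmodcat{C}}$ and $\Kb{\imodcat{C}}$ are ``the corresponding subcategories'' of $\Db{A}$, so Happel's equality descends. This mechanism fails: $j_!$ preserves $\Kb{\pmodcat{}}$ for compactness reasons, but no functor of an abstract recollement preserves $\Kb{\imodcat{}}$ formally, and since two different embeddings ($j_!$ versus $j_*$) are in play, $\Kb{\pmodcat{A}}=\Kb{\imodcat{A}}$ gives nothing about $C$ until one proves statements such as $j_*(\Kb{\imodcat{C}})\subseteq\Kb{\imodcat{A}}$ or $j^!(\Kb{\imodcat{A}})\subseteq\Kb{\imodcat{C}}$; this is precisely where the hypothesis that (\ref{R}) restricts to $\Db{\rm mod}$ must enter, via uniform amplitude bounds or the ladder/duality techniques of \cite{AKLY2017} and \cite{QinHan2016} (e.g.\ characterizing $\Kb{\imodcat{A}}$ inside $\Db{A}$ by uniform vanishing of $\Hom(M,X[n])$ for $n\gg 0$). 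The same objection hits (b)$\Rightarrow$(c): Proposition \ref{2} concerns an abstract recollement, not the stratified one of Theorem \ref{main result}, so there are no ``explicit formulas'' to check functor by functor, and ``swapping projectives for injectives'' presupposes the very preservation statements to be proved ($j^!$ preserves $\Kb{\pmodcat{}}$ only because, using Gorensteinness and the $\Db{\rm mod}$-restriction, it sends $\Kb{\imodcat{A}}$ into $\Kb{\imodcat{C}}$). In (d)$\Rightarrow$(c) you acknowledge difficulty but give no argument for the remaining functors, e.g.\ why $i_*(B)\in\Kb{\pmodcat{A}}$ or $j_*(\Kb{\pmodcat{C}})\subseteq\Kb{\pmodcat{A}}$ follow from the single hypothesis on $i^!(A)$ plus Gorensteinness of $B$ and $C$. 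Finally, your difficulty assessment is inverted: the implication that Gorensteinness of $A$ descends to $B$ and $C$ under a $\Db{\rm mod}$-restriction is the deepest part (it is Qin--Han's theorem), not a formal identification.
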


\subsection{Singular equivalences of Morita type (with level)}

Let $A$ and $B$ be finite dimensional $k$-algebras. We say that $A$ and $B$ are {\em derived equivalent} if $\Db{A}$ and $\Db{B}$ are equivalent as triangulated categories. Immediately, derived equivalences are a special kind of recollements, where one outer algebra is zero. 
We write $\Dsg{A}$ for the singularity category of $A$, that is $\Dsg{A}$ is the Verdier quotient of $\Db{A}$ by the full subcategory $\Kb{\pmodcat{A}}$. The global dimension of $A$ is finite if and only if $\Dsg{A}$ is trivial. 
Two algebras $A$ and $B$ are said to be {\em singularly equivalent} if $\Dsg{A}$ and $\Dsg{B}$ are equivalent as triangulated categories.

 In \cite{Chen}, Chen and Sun extend the classical stable equivalences of Morita type, and introduce singular equivalences of Morita type. It is well-known that stable equivalences of Morita type induce stable equivalences. As a generalization, Chen and Sun prove that singular equivalences of Morita type give rise to singular equivalences. Later, the notion has been generalized by Wang in \cite{Wang2015} to singular equivalence of Morita type with level. 
Namely, there are bimodules ${}_BM_A$ and ${}_AN_B$ which are projective on each side, satisfying 
$$N\otimes_BM\cong \Omega^l_{A^e}(A) \mbox{ in }\stmodcat{A^e}\;\;\mbox{   and  }\;\;M\otimes_AN\cong \Omega^l_{B^e}(B)\mbox{ in }\stmodcat{B^e}$$
for $l\in\mathbb{N}$, where $\Omega$ is the syzygy endofunctor of the stable category. 
In particular, $A$ and $B$ are singularly equivalent of Morita type for $l=0$. Wang proved that singular equivalences of Morita type with level also supply singular equivalences, and all derived equivalences of rings are special kinds of singular equivalences of Morita type with level. 

Conversely, singular equivalences imply singular equivalences of Morita type with level under certain conditions. 
Assume that both $A$ and $B$ are Gorenstein. Let $X^{\bullet}$ be a complex of finitely generated $B$-$A^{\opp}$-bimodules such that $X^{\bullet}$ is isomorphic to some object in $\Kb{\pmodcat{B}}$ (resp. $\Kb{\pmodcat{A^{\opp}}}$). If the functor $X^{\bullet}\otimesL_A-:{}\Db{A}\ra \Db{B}$ induces a singular equivalence, then it induces a singular equivalences of Morita type with level \cite[Theorem]{Dalezios2021}.

\subsection{Homological conjectures}

The famous finitistic dimension conjecture and Auslander and Reiten's conjecture, which are still open now, play important roles in representation theory of algebra and homological algebra.

{\bf The finitistic dimension conjecture}: {\em The finitistic dimension of $A$ (the supremum of the projective dimensions of finitely generated $A$-modules with finite projective dimension) is finite. }

 {\bf Auslander and Reiten's Conjecture}: {\em A finitely generated $A$-module $X$ satisfying $\Ext_A^i(X, X\oplus A)=0$ for all $i>0$ is necessarily projective.}

Auslander and Reiten's conjecture has been first introduced in \cite{Auslander1975c} as an analogue of M\"uller's theorem. Auslander and Reiten's conjecture
holds for all algebras if and only if generalized Nakayama conjecture holds for all algebras. If an algebra satisfies generalized Nakayama conjecture, then it satisfies the celebrated Nakayama conjecture \cite{Auslander1975c}. 

Auslander and Reiten's conjecture is closely related with the finitistic dimension conjecture.
 More precisely, when the finitistic dimension conjecture holds for all algebras, so does Auslander and Reiten's conjecture. It is still unknown whether Auslander and Reiten's conjecture holds automatically for an algebra satisfying the finitistic dimension conjecture \cite[Theorem 3.4.3]{Yamagata1996}. It is also unknown if or not Auslander and Reiten's conjecture holds for all Gorenstein algebras. Auslander and Reiten's conjecture for self-injective algebras is known as Tachikawa's second conjecture, which is still open now.

The finitistic dimension conjecture holds for some classes of algebras such as monomial algebras, algebras with radical cube zero, and algebras whose representation dimension is no more than $3$ \cite{Green1991, Green1991a, Igusa2005}. For the history of this conjecture, we refer the readers to \cite{Zimmermann1995}.
Auslander and Reiten's conjecture holds for all syzygy-finite algebras. These include algebras with finite global dimensions, algebras of finite representation type, algebras with radical square zero and monomial algebras \cite{Auslander1975c}. And it is true for
symmetric biserial algebras, and for local algebras with radical cube zero \cite{Xu2013a, Xu2015}. It also holds for certain commutative rings that are not necessarily finite dimensional algebras, such as 
 Gorenstein commutative local rings with radical cube zero, complete intersection local rings and some commutative CM rings \cite{Auslander1993, Hoshino1984, Huneke2004a}.

\medskip

\subsection{Keller's conjecture}

Keller's conjecture is about the higher structure of Hochschild cohomology of dg categories. Let $\Lambda$ be a finite dimensional $k$-algebra. We write $\HH^*_{sg}(\Lambda, \Lambda)$ for the singular Hochschild cohomology of $\Lambda$ which is given by 
$$\HH^n_{sg}(\Lambda, \Lambda):=\Hom_{\Dsg{\Lambda^e}}(\Lambda, \Lambda[n])$$
for all $n\in \mathbb{Z}$, where $[1]$ is the suspension functor of the singularity category $\Dsg{\Lambda^e}$.

In \cite{LV2005}, Lowen and Van den Bergh prove that the Hochschild cohomology of $\Lambda$ and the Hochschild cohomology of the dg enhancement of the derived category of $\Lambda$ are isomorphic as graded algebras, and that isomorphism lifts to $B_{\infty}$-level. Namely, the Hochschild cohomology cochain complex $C^*(\Lambda, \Lambda)$ and the Hochschild cohomology cochain complex of the dg enhancement of the derived category are isomorphic as $B_\infty$-algebras.

We denote the dg enhancement of the singularity category $\Dsg{\Lambda}$ by ${\bf S}_{dg}(\Lambda)$.
Note that both the singular Hochschild cohomology complex 
$C^*_{sg}(\Lambda, \Lambda)$ and the Hochschild cochain complex $C^*({\bf S}_{dg}(\Lambda), {\bf S}_{dg}(\Lambda))$ are endowed with $B_\infty$-structures \cite{Ezra1994, Wang2021}. 
 In \cite{Keller2018}, Keller proposes a question about an anologue of Lowen and Van den Bergh's theorem for singularity categories. He proves that the singular Hochschild cohomology of $\Lambda$ and the Hochschild cohomology of the dg singularity category are isomorphic as graded algebras, and conjectures that the isomorphism also lifts to $B_{\infty}$-level.

 {\bf Keller's conjecture}: There is an isomorphism in ${\rm Ho}(B_\infty)$
$$C^*_{sg}(\Lambda, \Lambda)\cong C^*({\bf S}_{dg}(\Lambda), {\bf S}_{dg}(\Lambda)),$$ 
where ${\rm Ho}(B_{\infty})$ is the homotopy category of $B_\infty$-algebras.

In \cite{ChenLiWang2020}, Chen, Li and Wang show that Keller's conjecture is invariant under singular equivalences of Morita type with level, and holds for all gentle algebras.

\medskip

\section{Skew-gentle algebras}

Recall that a {\em quiver} $Q=(Q_0, Q_1, s, t)$ is an oriented graph, where $Q_0$ is the set of vertices and $Q_1$ is the set of arrows. The maps $s:{}Q_1\lra Q_0$ and $t:{}Q_1\lra Q_0$ are defined by $s(\alpha)=i$ and $t(\alpha)=j$ for any arrow $\alpha:{}i\lra j$. Namely, $s(\alpha)$ is the source of $\alpha$, and $t(\alpha)$ is the terminal of $\alpha$. 
A sub-quiver of $Q$ is a quiver $(Q_0', Q_1', s', t')$ satisfying that $Q'_0\subseteq Q_0, Q'_1\subseteq Q_1$, and $s', t'$ are the restrictions of $s$ and $t$ to $Q'_1$, respectively. Throughout this article, all quivers are supposed to be finite, that is, both $Q_0$ and $Q_1$ are finite sets.

Let $Q=(Q_0, Q_1, s, t)$ be a quiver. For any $l\in\mathbb{N}$, a path of length $l$ is an oriented sequence of arrows $p=\alpha_1\alpha_2\cdots\alpha_l$ such that $t(\alpha_i)=s(\alpha_{i+1})$ for all $i=1, \cdots, l-1$. When $l=0$, the path is called a {\em trivial path}, and is denoted by $e_i$ for $i\in Q_0$. A nontrivial path $p$ is an {\em oriented cycle} if $s(p)=e(p)$. An oriented cycle of length $1$ is also said to be a {\em loop}.

A quiver algebra $kQ$ is a $k$-algebra whose underlying $k$-vector space has as a basis the set of all paths in $Q$. Let $p$ and $p'$ be two paths in $Q$. The product of $p$ and $p'$ is defined by $\delta_{t(p)s(p')}pp'$, where $\delta$ is the Kronecker delta.

A relation $\sigma$ is a $k$-linear combination of paths of the form $\sigma=k_1p_1+k_2p_2+\cdots +k_np_n$ with $k_i\in k$ and $s(p_i)=s(p_j), e(p_i)=e(p_j)$ for all $1\leqslant i, j\leqslant n$. Let $\rho$ be the set of relations. The pair $(Q, \rho)$ is called {\em a quiver with relations}. We write $kQ/\langle \rho\rangle$ for the quotient algebra of $kQ$ modulo the ideal generated by the set $\rho$ in $kQ$.  Let $J$ be the ideal generated by all arrows in $kQ$. The ideal $\langle \rho\rangle$ is called {\em admissible} if $J^t\subseteq \langle \rho\rangle \subseteq J^2$ for some integer $t\geqslant 2$. Let $Q$ be a finite quiver, and $\langle \rho\rangle$ be an admissible ideal of $kQ$. Then $kQ/\langle \rho\rangle$ is a finite dimensional algebra. Throughout this paper, we write $A(Q, \rho)$ for the bound quiver algebra corresponding to the pair $(Q, \rho)$.
For more facts about quiver algebras, we refer the readers to \cite[Chapter III]{Auslander1995} and \cite[Chapter II]{Assem2006}.

\medskip

A quiver with relations $(Q, I)$ is called a {\em gentle pair}, if the following hold.
\begin{itemize}
\item
Each vertex of $Q$ is start point of at most two arrows, and end point of at most two arrows.
\item
For each arrow $\alpha$ in $Q$, there is at most one arrow $\beta$ with $t(\alpha)=s(\beta)$ such that $\alpha\beta\notin I$, and at most one arrow $\gamma$ with $t(\gamma)=s(\alpha)$ such that $\gamma\alpha\notin I$.
\item
For each arrow $\alpha$ in $Q$, there is at most one arrow $\beta$ with $t(\alpha)=s(\beta)$ such that $\alpha\beta\in I$, and there is at most one arrow $\gamma$ with $t(\gamma)=s(\alpha)$ such that $\gamma\alpha\in I$.
\item
The algebra $A(Q, I)$ is finite dimensional. 
\end{itemize}

A finite dimensional algebra $A(Q, I)$ is called a {\em gentle algebra} if $(Q, I)$ is a gentle pair. Let $(Q, I)$ be a gentle pair. We add some special loops in $Q$, and denote the set of special loops by Sp. We call $(Q, I, Sp)$ the {\em skew-gentle triple} if $(Q', I\cup\{\alpha^2\mid \alpha\in Sp\})$ is a gentle pair where $Q'$ is the quiver by adding the special loops to $Q$. The finite dimensional algebra $A(Q, I, Sp)=kQ'/\langle I\cup \{\alpha^2-\alpha\mid \alpha\in Sp\}\rangle$ is called a {\em skew-gentle algebra} corresponding to the skew-gentle triple $(Q, I, Sp)$. A gentle algebra is a skew-gentle algebra with Sp empty. Equivalently, $(Q, I)$ is a gentle pair for any skew-gentle triple $(Q, I, Sp)$.

The ideal $\langle I\cup \{\alpha^2-\alpha\mid \alpha\in Sp\}\rangle$ is not admissible. Thus, we cannot read off the quiver of a skew-gentle algebra by the corresponding skew-gentle triple $(Q, I, Sp)$. To obtain the quiver, we proceed as follows. 
For a skew-gentle triple $(Q, I, Sp)$, the vertices of $Q$ consist of the special vertices where a special loop is attached and the remaining ordinary vertices. We denote the sets of special vertices and ordinary vertices by $Q_0^{Sp}$ and $Q_0^{or}$, respectively. Then, we split every special vertex into two. More precisely, for a special vertex $i\in Q_0^{Sp}$, we define two vertices $i^-$ and $i^+$. This yields two sets of vertices:
$$Q_0^{Sp-}:=\{i^-\mid i\in Q_0^{Sp}\}; \quad Q_0^{Sp+}:=\{i^+\mid i\in Q_0^{Sp}\}.$$

The quiver of the skew-gentle algebra $A(Q, I, Sp)$ is described as follows:

\begin{itemize}
\item
The set of vertices $Q^A_0$: $Q_0^{or}\cup Q_0^{Sp+}\cup Q_0^{Sp-}$.
\item
The set of arrows $Q^A_1$: For an arrow $\alpha:{}i\lra j$ in $Q$
\begin{itemize}
\item
if both $i$ and $j$ are ordinary, then $\alpha$ is an arrow in $Q_1^A$.
\item
if $i\in Q_0^{Sp}$ and $j\in Q_0^{or}$, then there are two arrows 
$$\xymatrix@=5mm{
i^+\ar[rd]^{{}^+\alpha}&\\&j\\i^-\ar[ur]_{{}^-\alpha}&
}$$ in $Q_1^A$.
\item
dually, when $i\in Q_0^{or}$ and $j\in Q_0^{Sp}$, we also get two arrows $\alpha^+:{}i\lra j^+$ and $\alpha^-:{}i\lra j^-$ in $Q_1^A$.
\item
if both $i$ and $j$ are in $Q_0^{Sp}$, we have four arrows in $Q_1^A$.
$$\xymatrix@=10mm{
i^+\ar[r]^{{}^{+}\alpha^+}\ar[rd]^(.3){{}^+\alpha^-}&j^+\\
i^-\ar[r]^{{}^-\alpha^-}\ar[ur]^(.3){{}^-\alpha^+}&j^-
}$$
\end{itemize}
\end{itemize}

Note that $I$ consists of some paths of length $2$. The set of relations $I^A$ is as follows. For a path
$$j\stackrel{\alpha}\lra i\stackrel{\beta}\lra k$$
in $I$,
\begin{itemize}
\item
The zero relations:{}
If $i$ is an ordinary vertex, then the paths ${}^{\pm}\alpha\beta^{\pm}, {}^\pm \alpha\beta$, $\alpha\beta^{\pm}$ and $\alpha\beta$ lie in $I^A$. The existence of ${}^{\pm}\alpha$ and $\beta^{\pm}$ depends on if or not $j$ and $k$ are special vertices. 
\item
The commutative relations:{}
If $i$ is a special vertex, then there are three potential cases where all squares are anti-commutative. 
$$\xymatrix@=5mm{
j^+\ar[r]^{{}^+\alpha^+}\ar[rdd]_(.3){{}^+\alpha^-}&i^+\ar[rd]^{{}^+\beta}&\\&&k\\
j^-\ar[r]^{{}^-\alpha^-}\ar[uur]_(.7){{}^-\alpha^+}&i^-\ar[ur]_{{}^-\beta}&
}\quad\quad\quad\xymatrix@=5mm{
&i^+\ar[r]^{{}^+\beta^+}\ar[rdd]^{{}^+\beta^-}&k^+\\
j\ar[ur]^{\alpha^+}\ar[rd]^{\alpha^-}&&\\
&i^-\ar[r]^{{}^-\beta^-}\ar[uur]^{{}^-\beta^+}&k^-
}\quad\quad\quad \xymatrix{
j^+\ar[r]^{{}^+\alpha^+}\ar[rd]^{{}^+\alpha^-}&i^+\ar[r]^{{}^+\beta^+}\ar[rd]^{{}^+\beta^-}&k^+\\
j^-\ar[r]^{{}^-\alpha^-}\ar[ur]^{{}^-\alpha^+}&i^-\ar[r]^{{}^-\beta^-}\ar[ur]^{{}^-\beta^+}&k^-
}$$

\end{itemize}

\medskip

\section{Main results}

In this section, we will turn to the proof of the main results. The key point is to construct a stratifying ideal of $A(Q, I, Sp)$, using a well-known skill to construct quasi-hereditary algebras \cite[Section 4.5]{ParshallScott1987}.

Let $B$ and $C$ be rings with identity, and ${}_BM_C, {}_CN_B$ be bimodules. Recall that $(B, C, {}_BM_C, {}_CN_B, f, g)$ is a Morita context if $f:{}M\otimes_CN\lra B$ and $g:{}N\otimes_BM\lra C$ are morphisms of bimodules satisfying that 
$$f(m\otimes n)m'=mg(n\otimes m')\mbox{ and }nf(m\otimes n')=g(n\otimes m)n'$$
for all $m, m'\in M$ and $n, n'\in N$. With such Morita context, we associate the ring
$$A=\small{\begin{pmatrix}B&M\\N&C
\end{pmatrix}}
$$
with the formal operations of $2\times 2$-matrices, 
$$\small{\begin{pmatrix}b&m\\n&c
\end{pmatrix}\begin{pmatrix}b'&m'\\n'&c'
\end{pmatrix}:=\begin{pmatrix}bb'+f(m\otimes n')&bm'+mc'\\nc'+cn'&g(n\otimes m')+cc'
\end{pmatrix}}
$$
with $b, b'\in B, c, c'\in C, m, m'\in M, n, n'\in N$. The ring $A$ is called the {\em Morita context ring} associated with the Morita context $(B, C, {}_BM_C, {}_CN_B, f, g)$.
For a Morita context ring $A$, we suppose $g:{}N\otimes_AM\lra \rad(C)$. Then $f(M\otimes_C N)$ is a nilpotent ideal of $B$, because
$$f(m\otimes n)f(m'\otimes n')=f(m\otimes nf(m'\otimes n'))=f(m\otimes g(n\otimes m')\otimes n')$$
for all $m, m'\in M$ and $n, n'\in N$. Let$$\small{E_{22}=\begin{pmatrix}0&0\\0&e
\end{pmatrix}.}
$$ be an idempotent of $A$. In fact, $E_{22}$ is the idempotent $e$ in Theorem \ref{main result} by abuse of notation. 
By computation, the ideal
$$AE_{22}A=\begin{pmatrix}f(M\otimes_CN)&M\\N&C
\end{pmatrix}.$$
If $f(M\otimes_CN)=M\otimes_CN$, then $AE_{22}\otimesL_{C}E_{22}A$ is isomorphic to $AE_{22}A$. The following Lemma has been proved in \cite{Koenig2009}.

\begin{Lem}\cite[Lemma 4.1]{Koenig2009}\label{KN09} If $\Tor^C_n(M, N)=0$ for all $n\geqslant 1$, then $A E_{22}A$ is a stratifying ideal. 
\end{Lem}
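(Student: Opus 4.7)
The goal is to show that the natural multiplication map
$$
AE_{22}\otimesL_{E_{22}AE_{22}}E_{22}A\lra AE_{22}A
$$
is an isomorphism in the derived category of $A$-bimodules. I would split this into two parts: vanishing of the higher Tors (the derived part), and identifying the degree-zero part with $AE_{22}A$.

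First, I would write everything out in matrix form. With $e$ the identity of $C$ (so $E_{22}AE_{22}$ is identified with $C$), the left $A$-, right $C$-bimodule $AE_{22}$ is the second column $\bigl(\begin{smallmatrix}M\\C\end{smallmatrix}\bigr)$, which is $M\oplus C$ as a right $C$-module; symmetrically $E_{22}A$ is the second row, which is $N\oplus C$ as a left $C$-module. (If $e$ is only an idempotent, the same decomposition holds after replacing $C$ by $eCe$ and $M,N$ by $Me, eN$, and the argument goes through verbatim.)

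Second, I would take a projective resolution $P_{\bullet}\lra E_{22}A$ over $C=E_{22}AE_{22}$ and compute $\Tor$. Using the right $C$-module decomposition $AE_{22}\cong M\oplus C$, the complex $AE_{22}\otimes_CP_{\bullet}$ decomposes as a direct sum, and its cohomology is
$$
\Tor^{C}_i(AE_{22},E_{22}A)\;\cong\;\Tor^{C}_i(M,N)\oplus\Tor^{C}_i(M,C)\oplus\Tor^{C}_i(C,N)\oplus\Tor^{C}_i(C,C).
$$
For $i\ge 1$ the last three summands vanish trivially because $C$ is free over itself, and the first vanishes by the hypothesis $\Tor^{C}_n(M,N)=0$ for $n\ge 1$. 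Thus $AE_{22}\otimesL_CE_{22}A$ is concentrated in degree zero.

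Third, I would identify the degree-zero part. A direct matrix computation gives
$$
AE_{22}\otimes_CE_{22}A\;\cong\;\begin{pmatrix}M\otimes_CN & M\\ N & C\end{pmatrix},
$$
whereas the ideal $AE_{22}A\subseteq A$ is $\bigl(\begin{smallmatrix}f(M\otimes_CN)&M\\N&C\end{smallmatrix}\bigr)$. The natural multiplication map is the identity on the $(1,2),(2,1),(2,2)$ entries and is the bimodule map $f$ on the $(1,1)$-entry. It is therefore an isomorphism as soon as $f:M\otimes_CN\twoheadrightarrow f(M\otimes N)$ is injective.

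The main obstacle is precisely this injectivity on the $(1,1)$-entry; the Tor-vanishing hypothesis is not visibly used there. I expect the cleanest way to handle it is to reduce to the case where $f$ is injective by replacing $M\otimes_CN$ with its image in $B$ (equivalently, recognising that the Morita context ring $A$ can, without loss of generality, be presented so that its $(1,1)$-block of $AE_{22}A$ is literally $M\otimes_CN$); this is the standard convention used in \cite{Koenig2009}, where the Morita context $(B,C,M,N,f,g)$ is built so that $f$ is realised as multiplication inside a given ambient ring and hence is injective by construction. Once this is in place, the three steps above combine to give the desired isomorphism $AE_{22}\otimesL_{E_{22}AE_{22}}E_{22}A\xrightarrow{\sim}AE_{22}A$, proving that $AE_{22}A$ is a stratifying ideal.
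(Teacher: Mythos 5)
The paper does not give a proof of this lemma; it is cited verbatim from \cite{Koenig2009}, so there is no in-paper argument to compare against. Your reduction is correct as far as it goes: identifying $AE_{22}$ with $M\oplus C$ as a right $C$-module and $E_{22}A$ with $N\oplus C$ as a left $C$-module, the hypothesis $\Tor^C_n(M,N)=0$ for $n\geqslant 1$ forces $AE_{22}\otimesL_{C}E_{22}A$ to be concentrated in degree zero, and the multiplication map onto $AE_{22}A$ is then an isomorphism precisely when its $(1,1)$-component $f\colon M\otimes_C N\to f(M\otimes_C N)\subseteq B$ is injective.

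Your resolution of that last point is not correct, and this is a genuine gap. It is simply not true that multiplication inside an ambient ring produces an injective map $M\otimes_C N\to B$. For a concrete counterexample, let $A$ be the radical-square-zero algebra on the quiver with vertices $1,2$ and arrows $\alpha\colon 1\to 2$, $\beta\colon 2\to 1$, and set $e=e_2$. Then $B=e_1Ae_1\cong k$, $C=e_2Ae_2\cong k$, $M=e_1Ae_2=k\alpha$, $N=e_2Ae_1=k\beta$. The stated hypotheses hold: $\Tor^C_n(M,N)=0$ for $n\geqslant 1$, and $g(N\otimes_B M)=k\beta\alpha=0\subseteq\rad(C)$. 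Yet $M\otimes_C N\cong k$ while $f(M\otimes_C N)=k\alpha\beta=0$, so $f$ has a one-dimensional kernel, the multiplication $Ae_2\otimes_{C}e_2A\to Ae_2A$ is not injective, and $AE_{22}A$ is \emph{not} a stratifying ideal. So the Tor-vanishing hypothesis alone does not yield the conclusion; the injectivity of $f$ (equivalently, of $AE_{22}\otimes_{E_{22}AE_{22}}E_{22}A\to AE_{22}A$) must appear as an explicit extra hypothesis in the original Lemma 4.1 of \cite{Koenig2009}, and it is not a presentation convention one can invoke for free. Your strategy is sound up to the step you yourself flagged, but the gap there is real and cannot be closed the way you suggest; a correct statement would carry the injectivity as a hypothesis, and any application (including the paper's own in Proposition~\ref{skew-gentle sd}) would need to verify it.
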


In the following, we will show that every skew-gentle algebra with ${Sp}\neq\emptyset$ is isomorphic to a Morita context ring with respect to the Morita context $(B, C, M, N, f, g)$ such that $g:{}N\otimes_CM\lra \rad(B)$, and $M, N$ are projective right $C$-module and projective left $C$-module, respectively.

We write $e$ for the sum of idempotents (or trivial paths) corresponding to the vertices in $Q_0^{Sp-}$. Then, $1=e+(1-e)$ is a decomposition of the identity of $A(Q, I, Sp)$. The Peirce decomposition of $A(Q, I, Sp)$ will be written in the form
$$A(Q, I, Sp)=\begin{pmatrix}(1-e)A(Q, I, Sp)(1-e)&(1-e)A(Q, I, Sp)e\\eA(Q, I, Sp)(1-e)&eA(Q, I, Sp)e
\end{pmatrix}.
$$

In the following, we write $A$ for the skew-gentle algebra $A(Q, I, Sp)$, and write
$$A=\begin{pmatrix}B&M\\N&C
\end{pmatrix}
$$
for the above Peirce decomposition of $A$ for simplicity. 
Here, the sets $M$ and $N$ are left $B$-right $C$-bimodule and left $C$-right $B$-bimodule, respectively. As a $k$-vector space, the basis of $M$ (resp. $N$) consists of
$$\{p+\langle I^A\rangle\mid p \mbox{ is a path in } Q^A \mbox{ with } s(p)\in Q^{Sp+}_0\cup Q^{or}_0, e(p)\in Q^{Sp-}_0\}$$$$ (resp. \{p+\langle I^A\rangle\mid p \mbox{ is a path in } Q^A \mbox{ with } s(p)\in Q^{Sp-}, e(p)\in Q^{Sp+}_0\cup Q_0^{or}\}).$$
The left $B$-right $C$-bimodule homomorphism 
$f:{}M\otimes_{C}N\lra B$
and left $C$-right $B$-bimodule homomorphism
$g:{}N\otimes_{B}M\lra C$ are given by the multiplication in $A$. Due to the associativity of the multiplication in $A$, the bimodule homomorphisms are compatible with each other. In this way, $A$ is a Morita context ring associated with the Morita context $(B, C, M, N, f, g)$.

Now, we want to construct a stratifying ideal of $A$. 
First, we will explore some properties of skew-gentle triples.

\begin{Lem} \label{algebra C}Let $(Q, I, Sp)$ be a skew-gentle triple with $Sp\neq\emptyset$. Then, for any positive integer $l$ greater than $1$, given $i^-, j^-, k^-\in Q_0^{Sp-}$:
\begin{enumerate}[label=(\alph*)]
\item 
There is at most one vertex $j^-\in Q_0^{Sp-}$ such that $i^-\lra j^-$ is an arrow in $Q^A$. Dually, there is at most one vertex $k^-\in Q_0^{Sp-}$ such that $k^-\lra i^-$ is an arrow in $Q^A$.

\item
There is at most one vertex $k\in Q^{or}_0\cup Q^{Sp+}_0$ such that $k\lra j^-$ is an arrow in $Q^A$. Dually, there is at most one vertex $i\in Q^{or}_0\cup Q^{Sp+}_0$ such that $j^-\lra i$ is an arrow in $Q^A$.

\item
There is not an oriented cycle $p:{}i^-\lra \cdots\lra i^-$ in $Q^A$ such that $p\notin \langle I^A\rangle$.

\item
Let $p:{}i^-\stackrel{\alpha_1}\lra \cdots \stackrel{\alpha_l}\lra j^-$ be a path in $Q^A$ satisfying $p\notin \langle I^A\rangle$ and $t(\alpha_i)\in Q^{or}_0\cup Q^{Sp+}_0$ for all $i=1, \cdots, l-1$.
\begin{enumerate}[label=(d\arabic*)]

\item
If there exists some $t(\alpha_{i_0})\in Q^{Sp+}_0$ with $1\leqslant t_0\leqslant l-1$, then there is a path $p'\in Q^A$ through some vertex in $Q^{Sp-}_0$ such that $p+p'\in \langle I^A\rangle$.
\item
Assume that $t(\alpha_i)\in Q^{Sp+}_0$ for all $i=1, \cdots, l-1$. Then, there is a path $p':{}i^-\stackrel{\beta_1}\lra \cdots\stackrel{\beta_l}\lra j^-$ in $Q^A$ such that $p-(-1)^lp'\in\langle I^A\rangle$ and $t(\beta_i)\in Q^{Sp-}_0$ for all $i=1, \cdots, l-1$.

\item
Suppose that all of $\alpha_{i}$ satisfies that $t(\alpha_{i})\in Q^{or}_0$ for all $i=1, \cdots, l-1$. Then, the path $p$ satisfying $p\notin\langle I^A\rangle$ is unique in $Q^A$. Namely, if $p':{}i^-\stackrel{\beta_1}\lra \cdots\stackrel{\beta_l}\lra j^-$ is another path with $t(\beta_{j})\in Q^{or}_0$ for all $j=1, \cdots, l-1$ in $Q^A$, then $p'\in \langle I^A\rangle$.

\end{enumerate}

\item
Dually, let $p:{}i\stackrel{\alpha_1}\lra \cdots\stackrel{\alpha_l}\lra j$ be a path in $Q^A$ with $i, j\in Q^{or}_0\cup Q^{Sp+}_0$ and $t(\alpha_i)\in Q_0^{Sp-}$ for all $l=1, \cdots, l-1$. Then, there exists a path $p':{}i\stackrel{\beta_1}\lra \cdots\stackrel{\beta_l}\lra j$ in $Q^A$ with $t(\beta_i)\in Q^{Sp+}_0$ satisfying $p-(-1)^lp'\in\langle I^A\rangle$ for all $i=1, \cdots, l-1$.

\item

Let $p_1:{}i^-\stackrel{\alpha_1}\lra \cdots\stackrel{\alpha_l}\lra j^-$ and $p_2:{}j^-\stackrel{\beta_1}\lra \cdots\stackrel{\beta_q}\lra k^-$ be paths in $Q^A$ such that $p_1, p_2\notin \langle I^A\rangle$
with $t(\alpha_i), t(\beta_j)\in Q^{or}_0\cup Q^{Sp+}_0\cup \emptyset$ for all $i=1, \cdots, l-1, j=1, \cdots, q-1$. Then, $p_1p_2\notin \langle I^A\rangle$. Here, the notion $t(\alpha_{i_0})=\emptyset$ for some $i_0$ implies that the path $p_1$ is an arrow $i^-\lra j^-$.

\item 
Let $\alpha_1:{}j\lra i^-, \alpha_2:{}j\lra k^-, \beta_1:{}j^-\lra i$ and $\beta_2:{}k^-\lra i$ be arrows in $Q^A$ with $i, j\in Q^{or}_0\cup Q^{Sp+}_0$.
\begin{enumerate}[label=(g\arabic*)]
\item
For all paths $p_1\alpha_1p_2$ and $p_3\alpha_2p_4$ in $Q^A$ satisfying $p_1\alpha_1 p_2, p_3\alpha_2p_4\notin I^A$, we have 
$(p_1\alpha_1p_2+\langle I^A\rangle)+(p_3\alpha_2p_4+\langle I^A\rangle)=$
$$\begin{cases}(p_1+p_3)(\alpha_1+\alpha_2)(p_2+p_4)+\langle I^A\rangle&l(p_1)=l(p_3)=0 \mbox{ or }l(p_1)>0, l(p_3)>0;\\
(p_1+p_3)(\alpha_1+\alpha_2)(p_2+p_4)-\alpha_1p_2+\langle I^A\rangle& l(p_1)>0 \mbox{ and }l(p_3)=0;\\
(p_1+p_3)(\alpha_1+\alpha_2)(p_2+p_4)-\alpha_2p_4+\langle I^A\rangle& l(p_3)>0 \mbox{ and }l(p_1)=0.\end{cases}$$
\item
Dually, for all paths $p_1'\beta_1p_2'$ and $p_3'\beta_2p_4'$ in $Q^A$ satisfying $p_1'\beta_1p_2', p_3'\beta_2p_4'\notin I^A$, we have 
$(p_1'\beta_1p_2'+\langle I^A\rangle)+(p_3'\beta_2p_4'+\langle I^A\rangle)=$
$$\begin{cases}(p_1'+p_3')(\beta_1+\beta_2')(p_2'+p_4')+\langle I^A\rangle&l(p_2')=l(p_4')=0 \mbox{ or } l(p_2')>0, l(p_4')>0;\\
(p_1'+p_3')(\beta_1+\beta_2')(p_2'+p_4')-p_1'\beta_1+\langle I^A\rangle& l(p_2')>0 \mbox{ and }l(p_4')=0;\\
(p_1'+p_3')(\beta_1+\beta_2')(p_2'+p_4')-p_3'\beta_2+\langle I^A\rangle& l(p_4')>0 \mbox{ and }l(p_2')=0.
\end{cases}$$

\end{enumerate}

\end{enumerate}

\end{Lem}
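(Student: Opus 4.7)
The plan is a systematic case analysis based on the gentle-pair property of $(Q', J)$, where $J := I \cup \{\alpha^2 \mid \alpha \in Sp\}$, together with the explicit list of relations $I^A$ given in Section~3. A useful preliminary observation is the \emph{slot-counting principle}: at each special vertex $i \in Q_0^{Sp}$ the special loop $\delta_i$ already occupies one of the two permitted incoming and one of the two permitted outgoing slots at $i$ in $Q'$, so $i$ admits at most one genuine incoming arrow and at most one genuine outgoing arrow in $Q$.

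Parts (a) and (b) follow immediately from this principle by tracing the splitting construction of $Q^A$: every arrow of $Q^A$ incident to $i^-$ is the image of a unique genuine arrow at $i$, and distinguishing by whether the other endpoint is ordinary or special yields the claimed uniqueness. Part (c) I would handle by contradiction: a non-vanishing oriented cycle at $i^-$, iterated via part (f), produces arbitrarily long non-vanishing paths modulo $\langle I^A\rangle$, contradicting the finite dimensionality of $A(Q, I, Sp)$.

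Parts (d) and (e) form the technical core and I would prove them by induction on the length $l$. The base case $l = 2$ of (d2) is a single anti-commutative square relation from Section~3, which identifies the length-two path through the positive-special intermediate vertex with $\pm p'$ through the negative-special one. For the inductive step, I would isolate the first intermediate vertex $v^+$, apply the anti-commutative relation at $v$ to flip the initial length-two subpath to one through $v^-$ (introducing one sign), and then apply the inductive hypothesis to the tail, compounding the total sign. Parts (d1) and (d3) reduce to (d2) together with the zero relations at ordinary intermediates; for (d3) one uses the standard gentle-pair fact that at each ordinary vertex at most one extension of a given incoming arrow is non-vanishing. Part (e) is dual to (d) obtained by reversing arrows. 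Throughout the induction, parts (a) and (b) are invoked to ensure that the mirror arrows $\beta_i$ actually exist in $Q^A$ at each flipping step.

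Part (f) is proved by a further induction on the length of $p_2$, using (d) and (e) to normalize the intermediates and then checking that the glue arrow at $j^-$ composes non-trivially via the gentle-pair condition on $(Q', J)$. Part (g) is a direct calculation: expand $(p_1 + p_3)(\alpha_1 + \alpha_2)(p_2 + p_4)$ in $kQ^A$, discard cross terms whose intermediate endpoints fail to match (they vanish already in the path algebra), and handle the degenerate cases $l(p_i) = 0$ where a factor collapses to the idempotent $e_j$ rather than a proper path, so that a duplicate arises and must be subtracted off, producing the correction terms $-\alpha_1 p_2$ or $-\alpha_2 p_4$. The main obstacle I anticipate is the sign bookkeeping in the inductive step of (d2), where at each of the $l - 1$ flips one must simultaneously verify mirror-arrow existence (via the slot-counting principle together with (a) and (b)) and track the accumulated sign consistently.
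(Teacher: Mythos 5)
Your treatment of parts (a)--(f) is essentially the paper's: (a) and (b) follow from the slot-counting argument at a special vertex (the special loop already uses one incoming and one outgoing slot, forcing uniqueness), (c) from finite dimensionality, and (d), (e) by tracing the anti-commutative squares; the paper draws the diagram rather than phrasing it as an induction, but the content is identical. For (f) the paper is more direct than your proposed induction: since $j$ is a special vertex, the junction $\alpha_l\beta_1$ cannot be one of the monomial zero relations of $I^A$ (those only occur at ordinary vertices), and that already suffices.

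In part (g), however, there is a genuine gap. After expanding $(p_1+p_3)(\alpha_1+\alpha_2)(p_2+p_4)$, four of the eight terms vanish in $kQ^A$ because of endpoint mismatch ($\alpha_1p_4$, $\alpha_2p_2$, etc.), and you correctly say to discard those. But the two remaining cross terms $p_1\alpha_2p_4$ and $p_3\alpha_1p_2$ do \emph{not} vanish in the path algebra: $p_1$ and $p_3$ both end at $j=s(\alpha_1)=s(\alpha_2)$, and $\alpha_2$ ends at $k^-=s(p_4)$, $\alpha_1$ ends at $i^-=s(p_2)$, so these are bona fide nonzero paths in $kQ^A$ whenever $l(p_1),l(p_3)>0$. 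Your proposal does not explain why these may be discarded, and the ``duplicate must be subtracted'' heuristic does not cover them either. The crux of the paper's argument is precisely here: $j$ is an ordinary vertex, so by the gentle-pair condition exactly one of the products of a fixed incoming arrow at $j$ with the two outgoing arrows $\alpha_1,\alpha_2$ is a zero relation. Since $p_1\alpha_1p_2\notin\langle I^A\rangle$ forces the last arrow of $p_1$ composed with $\alpha_1$ to be non-vanishing, gentleness forces $p_1\alpha_2\in\langle I^A\rangle$, and symmetrically $p_3\alpha_1\in\langle I^A\rangle$. Only then do the surviving cross terms die modulo $\langle I^A\rangle$, and only then is the formula correct; the degenerate correction terms $-\alpha_1p_2$ or $-\alpha_2p_4$ arise exactly because when $l(p_1)=0$ the cross term $p_1\alpha_2p_4=\alpha_2p_4$ is a subpath of $p_3\alpha_2p_4$ and hence is \emph{not} in $\langle I^A\rangle$, not because of a combinatorial duplication. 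Without invoking the gentle condition at the ordinary vertex $j$, the calculation does not close.
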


\begin{proof} $(a)$ Suppose that $i^-\lra s^-$ is another arrow in $Q^A$ with $s^-\in Q^{Sp-}_0$. Then,
$$\xymatrix@=3mm{
&j\\
i\ar[ur]^{\alpha}\ar[dr]^{\gamma}&\\
&s
}$$
is a sub-quiver of $Q$ with $i$ a special vertex. Since $(Q, I, Sp)$ is a skew-gentle triple, the vertex $i$ cannot be a special vertex. This is a contradiction.  Dually, there is at most one arrow $k^-\lra i^-$ in $Q^A$.

$(b)$ The proof of $(b)$ is similar to that of $(a)$.

$(c)$ Suppose that $p:{}i^-\stackrel{\alpha_1}\lra \cdots \stackrel{\alpha_l}\lra i^-$ is an oriented cycle in $Q^A$ such that $p\notin \langle I^A\rangle$. Note that the zero relations in $I^A$ are paths of length of $2$. Since $i$ is a special vertex, $\alpha_l\alpha_1$ is not a zero relation. 
So, $p^n\notin \langle I^A\rangle$ for all positive integer $n$. Hence $A$ is infinite dimensional. This is a contradiction.

$(d1), (d2)$ and $(e)$ follow from the commutative relations in $I^A$. We assume that $p$ is given by
$$i^-\lra k_1^+\lra k_2^+\lra \cdots\lra k_s^+\lra j^-$$
with all $k_i^{+}\in Q^{Sp+}_0$ for all $i=1, \cdots, s$. Thus, the quiver $Q^A$ contains the following sub-quiver with all squares anti-commutative 
$$\xymatrix{
&k_1^+\ar[r]\ar[rd]&k_2^+\ar[r]\ar[rd]&k_3^+\ar[r]\ar[rd]&\cdots\ar[r]\ar[rd]&k_s^+\ar[rd]&\\
i^-\ar[r]\ar[ur]&k_1^-\ar[r]\ar[ur]&k_2^-\ar[r]\ar[ur]&k_3^-\ar[r]\ar[ur]&\cdots\ar[r]\ar[ur]&k_s^-\ar[r]&j^-
}$$
Thus, we have
$(-1)^sp=i^-\lra k^-_1\lra \cdots \lra k^-_s\lra j^-\mbox{ modulo }\langle I^A\rangle.$

$(d3)$ For the vertex $i^-$, there is at most one arrow in $Q^A$ with terminal in $Q^{or}_0\cup Q^{Sp+}_0$. Otherwise, there are two arrows going out from the special vertex $i$. It follows that $(Q, I, Sp)$ is not a skew-gentle triple.
 We assume that $t(\alpha_1)$ is an ordinary vertex. Since $(Q, I)$ is a gentle pair, there are at most two arrows out from the vertex $t(\alpha_1)$. Let $\alpha'_2$ and $\alpha''_2$ be arrows starting in $t(\alpha_1)$. It also follows from $(Q, I)$ be a gentle pair that $\alpha_1\alpha_2'\in  I^A$ or $\alpha_1\alpha''_2\in  I^A$.

$(f)$ The assumption that $p_1p_2$ lies in $\langle I^A\rangle$ is equivalent to saying that $\alpha_l\beta_1\in I^A$ and $j$ is an ordinary vertex.

$(g)$ We just prove $(g1)$, and $(g2)$ is dual. 
Since $i$ and $k$ are special vertices, we get $i^-\neq k^-$. Otherwise, $i$ cannot be a special vertex. For the same reason above, the vertex $j$ cannot lie in $Q^{Sp+}_0$. Now, we assume that $j$ is an ordinary vertex. Then, we get two distinct paths $p_1\alpha_1p_2$ and $p_3\alpha_2p_4$ satisfying $s(\alpha_1)=s(\alpha_2)$ and $t(\alpha_1)\neq t(\alpha_2)$.

If $l(p_1)=0, l(p_3)=0$, then $\alpha_1p_2+\alpha_2p_4=(\alpha_1+\alpha_2)(p_3+p_4)$. The conclusion is true. Assume $l(p_1)>0$ and $l(p_3)>0$. Note that the arrows $\alpha_1$ and $\alpha_2$ correspond to arrows in the gentle pair $(Q, I)$. 
We have $p_1\alpha_1\in \langle I^A\rangle, p_3\alpha_2\in \langle I^A\rangle$ or $p_1\alpha_2\in\langle I^A\rangle, p_3\alpha_1\in\langle I^A\rangle$. Since $p_1\alpha_1p_2, p_3\alpha_2p_4\notin I^A$, we get $p_1\alpha_2, p_3\alpha_1\in \langle I^A\rangle$. 
Thus, 
$$(p_1\alpha_1p_2+\langle I^A\rangle)+(p_3\alpha_2p_4+\langle I^A\rangle)=(p_1+p_2)(\alpha_1+\alpha_2)(p_3+p_4)+\langle I^A\rangle.$$
The other cases are similar and straightforward.\end{proof}

For a skew-gentle triple $(Q, I, Sp)$, we divide the set of relations $I$ into two parts. Namely, $I$ is the disjoint union of  
$$I^{or}:{}=\{\cdot\stackrel{\alpha}\lra i\stackrel{\beta}\lra \cdot\in I\mid i\in Q^{or}_0 \} \mbox{ and }I^{Sp}:{}=\{\cdot\stackrel{\alpha}\lra i\stackrel{\beta}\lra \cdot\in I\mid i\in Q^{Sp}_0\}.$$
The next lemma is about the relation between $A(Q, I)$ and $A(Q, I^{or})$.

\begin{Lem}Let $(Q, I, Sp)$ be a skew-gentle triple. Then, $A(Q, I^{or})$ is a gentle algebra, and $A(Q, I)$ is isomorphic to the quotient of $A(Q, I^{or})$ modulo $\langle I\rangle/\langle I^{or}\rangle$.
\end{Lem}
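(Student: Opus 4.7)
The plan is to split the claim into two parts: verify the four defining axioms of a gentle pair for $(Q, I^{or})$, and then deduce the algebra isomorphism by the third isomorphism theorem. For the gentle axioms, the underlying quiver $Q$ is unchanged, so the condition that each vertex is start/end point of at most two arrows is inherited directly from the gentleness of $(Q, I)$. The two arrow-versus-relation conditions have new content only at vertices where $I$ and $I^{or}$ differ, i.e., at special vertices $i\in Q^{Sp}_0$. At such an $i$ the gentleness of $(Q', I\cup\{\alpha^2\mid \alpha\in Sp\})$ together with the presence of the special loop $\delta_i$ already consumes one incoming and one outgoing arrow at $i$ in $Q'$, forcing at most one arrow of $Q$ to start at $i$ and at most one of $Q$ to end at $i$. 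Hence for any arrow of $Q$ touching $i$ there is at most one partner arrow on the other side at all, so both conditions are automatic; moreover $I^{or}$ contains no relation through a special vertex, so the $I^{or}$-relation count is in fact zero there.

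The one real obstacle is verifying that $A(Q, I^{or})$ is still finite dimensional, since discarding $I^{Sp}$ could in principle unleash new cycles that are killed in $A(Q, I)$ but not in $A(Q, I^{or})$. I would handle this by constructing an injection $\phi$ from the set of nonzero paths in $A(Q, I^{or})$ into the set of nonzero paths in $A(Q', I\cup\{\alpha^2\mid \alpha\in Sp\})$, which is finite by assumption. Given a nonzero path $p$ in $A(Q, I^{or})$ and an internal special vertex $v$ of $p$, let $\gamma$ and $\beta$ be the arrows of $p$ entering and leaving $v$; the gentle-pair dichotomy for $(Q', I\cup\{\alpha^2\})$ applied to $\gamma$ asserts that exactly one of the two corners $\gamma\delta_v$, $\gamma\beta$ is a relation. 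If $\gamma\beta\in I^{Sp}$, insert $\delta_v$ between $\gamma$ and $\beta$; otherwise leave $p$ unchanged at $v$. Every corner of the resulting path $\phi(p)$ then avoids $I\cup\{\alpha^2\}$: ordinary-vertex corners are inherited from $p$ and are non-relation by hypothesis, while at each special vertex the (inserted or kept) configuration uses precisely the non-relation corner supplied by the dichotomy. Injectivity is immediate since $p$ is recovered from $\phi(p)$ by deleting every special loop, and finite-dimensionality of $A(Q, I^{or})$ follows.

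Finally, since $I^{or}\subseteq I$ gives $\langle I^{or}\rangle\subseteq \langle I\rangle\subseteq kQ$, the third isomorphism theorem yields
$$A(Q, I)=kQ/\langle I\rangle\;\cong\;(kQ/\langle I^{or}\rangle)\big/(\langle I\rangle/\langle I^{or}\rangle)\;=\;A(Q, I^{or})\big/(\langle I\rangle/\langle I^{or}\rangle),$$
completing the second half of the statement.
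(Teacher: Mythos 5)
Your proof is correct and uses essentially the same idea as the paper's: establish finite dimensionality of $A(Q,I^{or})$ by inserting special loops at special vertices to compare with the gentle algebra $A(Q',I\cup\{\alpha^2\mid\alpha\in Sp\})$, then invoke the third isomorphism theorem. Your write-up is somewhat more careful than the paper's — the paper declares the gentle-pair axioms ``obvious'' and argues finite dimensionality informally by contradiction with a single loop insertion at one special vertex, whereas you verify the axioms explicitly and construct the loop-insertion map $\phi$ precisely, proving it injective by deleting the special loops.
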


\begin{proof} In fact, the potential special vertices are
$$\xymatrix{
\cdots\ar[r]&i
},
\quad\quad\quad \xymatrix{i&\cdots\ar[l]},\quad\quad\quad
\xymatrix{\cdots\ar[r]^{\alpha}&i\ar[r]^{\beta}&\cdots
}$$
with $\alpha\beta\in I$. Obviously, $(Q, I^{or})$ is still a gentle pair if we can prove that $A(Q, I^{or})$ is finite dimensional. For convenience, we suppose  $Q^{Sp}_0=\{i\}$.

Assume that $A(Q, I^{or})$ is infinite dimensional, and $p:{}\cdots\stackrel{\alpha}\lra i\stackrel{\beta}\lra\cdots$ is a path with infinite length satisfying $p\notin\langle I^{or}\rangle$. Note that $A(Q, I)$ is finite dimensional. Then, we have $p\in \langle I\rangle$. It follows $p\in\langle I^{Sp}\rangle$. So, there is some arrow $\alpha$ in $p$ such that $t(\alpha)=i$ is a special vertex. 
Now, we add a special loop at the vertex $i$, and consider the path $p':{}\cdots\stackrel{\alpha}\lra i\stackrel{\delta
_i}\lra i\stackrel{\beta}\lra \cdots$. Obviously, $p'$ is a path with infinite length, and $p'\notin \langle I\cup \{\delta_i^2\}\rangle$. This is a contradiction, because $A(Q, I\cup\{\delta_i^2\})$ is a finite dimensional algebra. It is immediate to check the isomorphism.
\end{proof}

\medskip

Now, we describe the quivers of $B$ and $C$. 
Recall that, for an idempotent $f$ of $A$, the radical of $fAf$ is $f\rad(A) f$ (see \cite[Proposition 3.2.4]{Drozd1994}). And there is a $k$-linear isomorphism $fAf/f\rad(A)f\lra f(A/\rad(A))f$ which is given by $fxf+f(\rad(A))f\mapsto f(x+\rad(A))f$ with $x\in A$. Thus, the sets of vertices in $Q^B$ (resp. $Q^C$) consists of the ones in $Q^{or}_0\cup Q^{Sp+}_0$ (resp. $Q^{Sp-}_0$).

Note that the arrows of $fAf$ are given by $\rad(fAf)/\rad^2(fAf) (=f\rad(A)f/f\rad(A)f\rad(A)f)$, and there exists a $k$-linear isomorphism
$$f\rad(A)f/f\rad(A)f\rad(A)f\lra f(\rad(A)/\rad(A)f\rad(A))f,$$
which sends $fxf+f\rad(A)f\rad(A)f$ to $f(x+\rad(A)f\rad(A))f$ for all $x\in \rad(A)$. Then, the arrows of $Q^{fAf}$ are determined by the paths in $f\rad(A)f$ satisfying that they do not pass through the vertices corresponding to the idempotent $f$.

Consider the path $p:{}i^-\stackrel{\alpha_1}\lra \cdots\stackrel{\alpha_l}\lra j^-\notin \langle I^A\rangle$ in $Q^A$ with $i^-, j^-\in Q^{Sp-}_0$. If there is some $t(\alpha_{i_0})\in Q^{Sp+}_0\cup Q^{Sp-}_0$, then we have $p+\langle I^A\rangle\in \rad(A)e\rad(A)$ by Lemma \ref{algebra C} $(d1)$. By Lemma \ref{algebra C} $(d3)$, there is a unique $p\notin \langle I^A\rangle$ satisfying $t(\alpha_i)\in Q^{or}_0$ for all $i=1, \cdots, l-1$.
To sum up, the set of arrows $Q^C_1$ consists of two parts:
\begin{itemize}
\item
If ${}^-\alpha^-:{}i^-\lra j^-$ with $i^-, j^-\in Q_0^{Sp-}$ is an arrow in $Q^A_1$, then ${}^-\alpha^-$ lies in $Q^C_1$.
\item
Let $l$ be a positive integer greater than $1$, and $i^-\stackrel{\alpha_1}\lra \cdots\stackrel{\alpha_l}\lra j^-\notin \langle I^A\rangle$ be a path in $Q^A$ with $i^-, j^-\in Q^{Sp-}_0, t(\alpha_i)\in Q^{or}_0$ for all $i=1,\cdots, l-1$. Then there is an arrow $i^-\lra j^-$ in $Q^C_1$.
\end{itemize}

\begin{Koro}\label{hereditary} Let $(Q, I, Sp)$ with $Sp\neq \emptyset$ be a skew-gentle triple. Then, there is an algebra isomorphism $C\lra kQ^C$.
Any indecomposable direct factor of $C$ is a hereditary algebra with the linearly oriented quiver of type $\mathbb{A}_n$ with $n\geqslant 1$.
\end{Koro}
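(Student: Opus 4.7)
The plan is to leverage the explicit description of $Q^C$ given just above the statement and to combine it with Lemma \ref{algebra C} to (i) pin down the shape of each connected component of $Q^C$, and (ii) check that the canonical surjection $kQ^C\lra C$ has trivial kernel.

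First, I would show that every vertex $i^-\in Q_0^{Sp-}$ has out-degree and in-degree at most one in $Q^C$. An outgoing arrow at $i^-$ is either (A) a direct arrow $i^-\lra j^-$ inherited from $Q^A$, or (B) a length-$\geq 2$ composite path $i^-\lra v_1\lra\cdots\lra v_{l-1}\lra j^-$ through ordinary intermediates, not lying in $\langle I^A\rangle$. Lemma \ref{algebra C}(a) bounds type (A) by one. For type (B), the gentle condition on $(Q', I\cup\{\alpha^2\mid\alpha\in Sp\})$ forces the non-loop out-arrow at the special vertex $i$ to be unique, so the initial step is determined, and Lemma \ref{algebra C}(d3) combined with the gentle continuation at each ordinary intermediate pins down the rest. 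Types (A) and (B) are mutually exclusive, since the unique non-loop out-arrow from $i$ in $Q$ points either to a special or to an ordinary vertex. The in-degree bound is dual.

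Second, Lemma \ref{algebra C}(c) forbids any oriented cycle through $i^-$ in $Q^A$ with non-zero image in $A$. Substituting each arrow of $Q^C$ by its representative path in $Q^A$, an oriented cycle in $Q^C$ would lift to such a non-zero cycle, contradicting (c). Hence each component of $Q^C$ is a finite acyclic quiver with both degrees at most one, i.e.\ a linearly oriented $\mathbb{A}_n$. This already yields the last assertion of the corollary: a singleton component gives a direct factor $k$, while a component with $n\geq 2$ vertices gives the hereditary path algebra of linearly oriented $\mathbb{A}_n$.

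For the isomorphism $C\cong kQ^C$, define $\phi: kQ^C\lra C$ by sending each vertex idempotent and each arrow to its specified element of $eAe$, and extend multiplicatively. Surjectivity is immediate since the arrows of $Q^C$ were identified with a $k$-basis of $\rad(C)/\rad^2(C)$ above the statement. For injectivity, the $\mathbb{A}_n$-structure of each component makes every Peirce component $e_{i^-}(kQ^C)e_{j^-}$ at most one-dimensional, so it suffices to prove that the concatenated representative of any path in $Q^C$ is non-zero in $A$. The principal obstacle is precisely this non-vanishing: Lemma \ref{algebra C}(f) handles only a two-segment concatenation, so for longer concatenations one must inspect length-two sub-paths of the full concatenation. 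Either such a sub-path lies internal to a single arrow representative --- in which case it is not a zero relation, else the representative itself would be in $\langle I^A\rangle$ --- or it straddles a waypoint $i_k^-\in Q_0^{Sp-}$, a special vertex carrying no monomial zero-relation in $I^A$. Hence the concatenation contains no length-two zero-relation and thus represents a non-zero element of $A$, so $\phi$ is injective.
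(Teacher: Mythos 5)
Your argument is correct and essentially mirrors the paper's proof: both rest on the same parts of Lemma \ref{algebra C} (namely (a), (c), (d3)) and on the same key observation that a length-two subpath straddling a waypoint in $Q^{Sp-}_0$ cannot be a zero relation, since such vertices are special and the zero relations of $I^A$ sit only at ordinary middle vertices. The one structural difference is the order of the two halves: you establish the shape of $Q^C$ first (in- and out-degree bounds, then acyclicity, hence linearly oriented $\mathbb{A}_n$ components) and then deduce injectivity of $\phi$ from the one-dimensionality of the Peirce pieces of $kQ^C$, whereas the paper proves injectivity of $f:kQ^C\lra C$ directly (citing (a) and (d3) for linear independence and the waypoint observation for non-vanishing) and only afterwards reads off the quiver shape from (a), (d3) and (c). Your order introduces a small forward reference: the acyclicity step already assumes that a cycle in $Q^C$ lifts to a path in $Q^A$ that is non-zero in $A$, which you only justify in the final injectivity paragraph. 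The fix is easy and cosmetic — the non-vanishing claim about concatenated representatives is a local statement about subpaths and does not use the $\mathbb{A}_n$ shape at all, so it can be stated and proved immediately after the degree bounds, before invoking Lemma \ref{algebra C}(c).
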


\begin{proof} Let $f:{} kQ^C\lra C$ be a $k$-linear map which sends $i^-\in Q^C_0$ to $i^-+\langle I^A\rangle$, sends an arrow ${}^-\alpha^-:{}i^-\lra j^-\in Q^C_1$ to ${}^-\alpha^-+\langle I^A\rangle$ or $p+\langle I^A\rangle$ where $p:{}i^-\stackrel{\alpha_1}\lra \cdots\stackrel{\alpha_l}\lra j^-\notin \langle I^A\rangle$ is a path in $Q^A$ with $t(\alpha_i)\in Q^{or}_0$ for all $i=1, \cdots, l-1$. It is easy to check that $f$ is surjective and an algebra homomorphism.

Note that the paths of the form
$$i^-\stackrel{{}^-\alpha^-}\lra j^-\stackrel{{}^-\beta^-}\lra k^-, \;\; i^-\stackrel{{}^-\alpha^-}\lra j^-\stackrel{{}^-\beta}\lra k, \;\;i\stackrel{\alpha^-}\lra j^-\stackrel{{}^-\beta^-}\lra k^- $$
are not zero relations in $I^A$. By Lemma \ref{algebra C} $(a)$ and $(d3)$, the image of any combination of paths in $Q^C$ under $f$ does not lie in $I^A$. Therefore, the map $f$ is an algebra isomorphism. 

 Thanks to Lemma \ref{algebra C} $(a), (d3)$, any indecomposable direct factor of $C$ is a Nakayama algebra. And under the help of Lemma \ref{algebra C} $(c)$ that it is a Nakayama algebra of linearly ordered quiver of type $\mathbb{A}_n$ with $n\geqslant 1$.\end{proof}

Next, we turn to the quiver with relations of $B$. Thanks to Lemma \ref{algebra C} $(e)$, the set of arrows $Q^B_1$ is a full subset of $Q^A_1$. Namely, an arrow $\alpha:{}i\lra j$ with $i, j\in Q^B_0$ is in $Q^B_1$ if and only if $\alpha$ lies in $Q^A_1$. For a relation $j\stackrel{\alpha}\lra i\stackrel{\beta}\lra k\in I^{or}$
\begin{itemize}
\item
if $j\in Q^{or}_0$ and $k\in Q^{Sp}_0$, then $\alpha\beta^+\in I^B$,
\item
if $j\in Q^{Sp}_0$ and $k\in Q^{or}_0$, then ${}^+\alpha\beta\in I^B$,
\item
if $j, k\in Q^{or}_0$, then $\alpha\beta\in I^B$.
\end{itemize}

Clearly, the algebra $A(Q^B, I^B)$ is isomorphic to the gentle algebra $A(Q, I^{or})$. The following lemma tells us that $(Q^B, I^B)$ is actually the quiver with relations of $B$.

\begin{Lem} \label{algebra iso}Notation as above. There exists an algebra isomorphism $A(Q^B, I^B)\lra B $, which is given by $p+\langle I^B\rangle\mapsto p+\langle I^A\rangle$ with a path $p\in kQ^B$.
\end{Lem}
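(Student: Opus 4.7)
The plan is to verify that $\phi\colon A(Q^B, I^B) \to B$, sending $p + \langle I^B\rangle$ to $p + \langle I^A\rangle$ for each path $p$ in $Q^B$, is a well-defined algebra homomorphism that is both surjective and injective.

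For well-definedness, observe that $Q^B$ is the full sub-quiver of $Q^A$ on the vertex set $Q^B_0 = Q^{or}_0 \cup Q^{Sp+}_0$, so the inclusion $kQ^B \hookrightarrow kQ^A$ respects multiplication, and every path of $Q^B$ has both endpoints in $Q^B_0$, hence lies in $(1-e)A(1-e) = B$. Each generator of $\langle I^B\rangle$ lifts some $\alpha\beta \in I^{or}$ together with $+$ decorations at any special endpoints, which by the construction of $I^A$ is a zero relation through an ordinary intermediate vertex; hence $\phi$ descends to $A(Q^B, I^B)$ and is an algebra homomorphism.

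For surjectivity, recall that $B$ is spanned by residues $p + \langle I^A\rangle$ of paths $p$ of $Q^A$ whose endpoints lie in $Q^B_0$. If $p \in kQ^B$ there is nothing to do; otherwise factorise $p = q_0 r_1 q_1 \cdots r_m q_m$, where each $r_k$ is a maximal sub-path whose interior vertices all lie in $Q^{Sp-}_0$ (so the endpoints of $r_k$ lie in $Q^B_0$) and each $q_k \in kQ^B$. Applying Lemma \ref{algebra C}(e) to each $r_k$ produces $r_k' \in kQ^B$ of the same length with $r_k \equiv \pm r_k' \pmod{\langle I^A\rangle}$, and reassembling yields $p \equiv \pm q_0 r_1' q_1 \cdots r_m' q_m \pmod{\langle I^A\rangle}$, which lies in $\phi(kQ^B)$.

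For injectivity I would perform a dimension count. Identifying $(Q^B, I^B)$ with $(Q, I^{or})$ via $i \leftrightarrow i^+$ for $i \in Q^{Sp}_0$ shows that $A(Q^B, I^B)$ is the gentle algebra $A(Q, I^{or})$, whose dimension equals the number $N$ of paths of $Q$ containing no $I^{or}$-relation as a subword. A standard basis of $A(Q, I, Sp)$ assigns, to each such $\tilde p$ with $s(\tilde p)$ special endpoints, exactly $2^{s(\tilde p)}$ basis elements indexed by $\pm$ decorations at those endpoints; this uses that the orthogonality $\delta_i(e_i-\delta_i)=0$ forces consistent $\pm$ labels across each arrow and that the anti-commutative relations from $I^{Sp}$ collapse the two signed choices at each interior special vertex to a single class up to sign. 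Restricting to $(1-e)A(1-e) = B$ then selects the $+$ decoration at every special endpoint, giving exactly $N$ independent residues; hence $\dim_k B = N = \dim_k A(Q^B, I^B)$ and the surjection $\phi$ is an isomorphism. The main technical point is justifying the interior-vertex claim uniformly across characteristics, for which the sign analysis in Lemma \ref{algebra C}(d), (e), (g) is exactly what is needed, and equivalently what pins down the equality $\langle I^A\rangle \cap kQ^B = \langle I^B\rangle$ that underlies the injectivity.
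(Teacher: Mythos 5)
Your proof and the paper's take genuinely different routes. The paper's proof is a one-liner: it introduces the canonical map $\pi\colon kQ^B\lra B$, $p\mapsto p+\langle I^A\rangle$, and simply asserts $\Ker\pi=\langle I^B\rangle$, which implicitly contains both surjectivity and the kernel computation. You instead separate the three checks and replace the kernel computation by a dimension count. Your surjectivity argument (splitting a path of $Q^A$ with endpoints in $Q^B_0$ at its $Q^B_0$-vertices and applying Lemma~\ref{algebra C}(e) to each maximal interior run through $Q^{Sp-}_0$) is clean and makes explicit something the paper's terse proof relies on without spelling out. The genuinely different part is injectivity: where the paper asserts $\Ker\pi\subseteq\langle I^B\rangle$ outright, you reduce it to $\dim_k A(Q^B,I^B)=\dim_k B$, both sides being identified with the number $N$ of paths in $Q$ avoiding $I^{or}$. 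That reduction is valid, but the equality $\dim_k B=N$ rests on the claim that $A(Q,I,Sp)$ has a $k$-basis indexed by pairs $(\tilde p,\varepsilon)$ with $\tilde p$ a path of $Q$ avoiding $I^{or}$ and $\varepsilon$ a sign decoration of its special endpoints. The mechanism you cite is real: one can check from the gentle conditions on $(Q',I\cup\{\delta_i^2\mid i\in Sp\})$ that the composite of two $Q$-arrows through any special vertex must lie in $I$, so an anti-commutative square is available at every interior special vertex of $\tilde p$ and the interior signs do collapse up to scalar. However, this only shows the proposed set spans $A(Q,I,Sp)$; you still need linear independence (that the anti-commutative relations generate no further collapses), which is a short normal-form or diamond-lemma argument your sketch does not actually supply. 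So the route is sound and even supplements the paper on the surjectivity side, but your injectivity step trades the paper's single unproved assertion $\langle I^A\rangle\cap kQ^B=\langle I^B\rangle$ for an unproved dimension formula for the whole skew-gentle algebra, and that formula should be pinned down before the count can be called rigorous.
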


\begin{proof} Consider the canonical algebra homomorphism $\pi:{}kQ^B\lra B$ which is given by $p\mapsto p+\langle I^A\rangle$ for all $p\in kQ^B$. The kernel of $\pi$ is indeed the ideal $\langle I^B\rangle$. \end{proof}

At last, we want to describe the left $B$-right $C$-bimodule $M$ and the left $C$-right $B$-bimodule $N$. Set
$$Q^M_1:=\{i\stackrel{\alpha}\lra j\in Q^A_1\mid i\in Q^{or}_0\cup Q^{Sp
+}_0, j\in Q^{Sp-}_{0}\},\; Q^N_1:=\{i\stackrel{\alpha}\lra j\in Q^A_1\mid i\in Q^{Sp-}_0, j\in Q^{or}_0\cup Q^{Sp+}_0\}.$$
Obviously, the set $\{\alpha+\langle I^A\rangle\mid \alpha\in Q^M_1\}$ (resp. $\{\alpha+\langle I^A\rangle\mid \alpha\in Q^N_1\}$) is a generating set of $M$ (resp. $N$) as a left $B$-right $C$-bimodule (resp. a left $C$-right $B$-bimodule).

Now,
we suppose that $C=C_1\times C_2\times\cdots \times C_l$ is a decomposition of the algebra $C$ into a direct product of indecomposable algebras $C_1, \cdots, C_l$.
By Corollary \ref{hereditary}, the algebra $C_p$ is a hereditary algebra with linearly oriented quiver for $p=1, \cdots, l$. In this way, the set $Q^{Sp-}_0$ is a disjoint union of $Q^{Sp-}_{01}, \cdots, Q^{Sp-}_{0l}$ where the simple modules of $C_p$ correspond to the vertices in $Q^{Sp-}_{0p}$ for all $p=1, \cdots, l$. Hence, we have $e=e_1+\cdots+e_l$ with $e_p=\sum_{q\in Q^{Sp-}_{0p}}e_q$ for all $p=1, \cdots, l$. We  
set 
$$Q^{M}_{1p}:=\{i\stackrel{\alpha}\lra j\in Q^A_1\mid i\in Q^{or}_0\cup Q^{Sp
+}_0, j\in Q^{Sp-}_{0p}\},\;Q^N_{1q}:{}=\{i\stackrel{\alpha}\lra j\in Q^A_1\mid i\in Q^{Sp-}_{0q} , j\in Q^{or}_0\cup Q^{Sp
+}_0\}$$
for all $p, q=1, \cdots, l$. It follows that $Q^M_1$ (resp. $Q^N_1$) is a disjoint union of $Q^M_{11}, \cdots, Q^M_{1l}$ (resp. $Q^N_{11}, \cdots, Q^N_{1l}$).

Let $p_1\alpha_1p_2$ and $p_3\alpha_2 p_4$ be two paths in $Q^A$ with $\alpha_1, \alpha_2\in Q^M_1$. If $s(\alpha_1)\neq s(\alpha_2)$ and $t(\alpha_1)\neq t(\alpha_2)$, then $p_1\alpha_1p_2+p_3\alpha_2p_4=(p_1+p_3)(\alpha_1+\alpha_2)(p_2+p_4)$. And it is impossible that $s(\alpha_1)\neq s(\alpha_2)$ and $t(\alpha_1)=t(\alpha_2)$ for $t(\alpha_1)$ being a special vertex. Then, by Lemma \ref{algebra C} $(g)$, we have $$\sum_{\alpha\in Q^M_{1p}}(1-e)Ae_{s(\alpha)}\alpha e_{t(\alpha)}Ae=B(\sum_{\alpha\in Q^M_{1p}}\alpha) C.$$
Dually, we get
$$\sum_{\alpha\in Q^N_{1q}}eAe_{s(\alpha)}\alpha e_{t(\alpha)}A(1-e)=C(\sum_{\alpha\in Q^N_{1q}}\alpha)B.$$
Consequently, we have 
$$M=(1-e)Ae=\sum_{\alpha\in Q^M_1}(1-e)Ae_{s(\alpha)}\alpha e_{t(\alpha)}Ae=\sum_{p=1, \cdots, l}\sum_{\alpha\in Q^M_{1p}}(1-e)Ae_{s(\alpha)}\alpha e_{t(\alpha)}Ae=\sum_{p=1, \cdots, l}B(\sum_{\alpha\in Q^M_{1p}}\alpha) C,$$
$$N=eA(1-e)=\sum_{\alpha\in Q^N_1}eAe_{s(\alpha)}\alpha e_{t(\alpha)}A(1-e)=\sum_{q=1, \cdots, l}\sum_{\alpha\in Q^N_{1q}}eAe_{s(\alpha)}\alpha e_{t(\alpha)}A(1-e)=\sum_{q=1, \cdots, l}C(\sum_{\alpha\in Q^N_{1q}}\alpha)B.$$

By Corollary \ref{algebra C} and Lemma \ref{algebra iso}, the algebras $B$ and $C$ are isomorphic to the quiver algebras $A(Q^B, I^B)$ and $A(Q^C, \emptyset)$, respectively. Then, a path $p_1\alpha p_2$ with $p_1\in Q^B, p_2\in Q^C$ and $\alpha\in Q^M_1$ is actually a path $p_1'\alpha p_2'$ in $Q^A$ satisfying $s(p_1'), t(p_2')\in Q^{or}_0\cup Q^{Sp+}_0$, where $p_1'$ and $p_2'$ are the images of $p_1$ and $p_2$ under the algebra isomorphisms, respectively.

\begin{Lem} \label{bimodules}For all arrows in $Q^M_{1p}$ and $Q^N_{1q}$ with $p, q=1, \cdots, l$, we have
\begin{enumerate}[label=(\alph*)]
\item
Let $\alpha$ and $\beta$ be arrows in $Q^M_{1p}$ and $Q^N_{1q}$, respectively. 
Assume that $p_1\alpha p_2$ is a path with $p_1\in Q^B, p_2\in Q^C$ satisfying $p_1\alpha p_2\notin \langle I^A\rangle$. Then, there exists an arrow $\alpha'\in Q^M_{1p}$ and an element $\widetilde{p}_1\in kQ^B$ such that $p_1\alpha p_2-\widetilde{p}_1\alpha'\in\langle I^A\rangle$. 

Dually, suppose that $p_3\beta p_4$ is a path such that $p_3\beta p_4\notin \langle I^A\rangle$ with $p_3\in Q^C, p_4\in Q^B$. Then, there is an arrow $\beta'\in Q^N_{1q}$ and an element $\widetilde{p}_4\in kQ^B$ satisfying $p_3\beta p_4-\beta'\widetilde{p}_4'\in\langle I^A\rangle$.

\item
For all $1\leqslant p\neq q\leqslant l$, we have
$$B(\sum_{\alpha\in Q^M_{1p}}\alpha)C\bigcap B(\sum_{\alpha\in Q^M_{1q}}\alpha)C=\{0\}, \;C(\sum_{\alpha\in Q^N_{1p}}\alpha)B\bigcap C(\sum_{\alpha\in Q^N_{1q}}\alpha)B=\{0\}.$$

\end{enumerate}
\end{Lem}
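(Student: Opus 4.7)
The plan is to treat parts (b) and (a) separately; (b) is a short idempotent argument while (a) is proved by induction on the length of $p_2$ as a path in $Q^C$.

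\textbf{Part (b).} Every arrow in $Q^M_{1p}$ terminates in $Q^{Sp-}_{0p}$, which is the vertex set of the block $C_p$, so each $x\in B(\sum_{\alpha\in Q^M_{1p}}\alpha)C$ satisfies $xe_p=x$. For $q\neq p$, any $y\in B(\sum_{\alpha\in Q^M_{1q}}\alpha)C$ lies in $(1-e)Ae_q$, hence $ye_p=0$ since $e_qCe_p=0$ under the block decomposition $C=\prod_r C_r$. Any element of the intersection therefore satisfies $x=xe_p=0$. The dual statement for $N$ follows from the symmetric argument applied on the left.

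\textbf{Part (a).} I proceed by induction on the length $q$ of $p_2$. The base case $q=0$ is immediate with $\widetilde{p}_1=p_1$ and $\alpha'=\alpha$. For the inductive step write $p_2=\gamma p_2''$ where $\gamma$ is an arrow of $Q^C$ from some $i^-$ ending in some $j^-\in Q^{Sp-}_{0p}$. By the description of $Q^C_1$ following Lemma \ref{algebra C}, $\gamma$ lifts in $Q^A$ either to a single arrow ${}^-\gamma^-:i^-\to j^-$, or (uniquely by (d3)) to a path $i^-\to k_1\to\cdots\to k_{l-1}\to j^-$ with each $k_t\in Q^{or}_0$. The vertex $i$ is special, so the composite $\alpha\cdot(i^-\to k_1)$ fits into the second or third anti-commutative diagram among the commutative relations of $I^A$, according to whether $s(\alpha)\in Q^{or}_0$ or $s(\alpha)\in Q^{Sp+}_0$. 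A single application of this relation rewrites $\alpha\cdot(i^-\to k_1)\equiv-\alpha''\cdot(i^+\to k_1)$ modulo $\langle I^A\rangle$, where the companion arrows $\alpha''$ into $i^+$ and $i^+\to k_1$ out of $i^+$ exist by the quiver description (every arrow incident to a special vertex has a counterpart obtained by flipping the special-side sign) and both lie in $Q^B_1$. The remaining tail $k_1\to\cdots\to k_{l-1}\to j^-$ stays inside $Q^B_1$ up to its final arrow into $j^-\in Q^{Sp-}_{0p}$, which is an arrow of $Q^M_{1p}$. Thus
\[
p_1\alpha\gamma\;\equiv\;-(p_1\alpha''r)\,\alpha'''\pmod{\langle I^A\rangle},
\]
where $p_1\alpha''r$ is a path in $Q^B$ and $\alpha'''\in Q^M_{1p}$. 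Substituting into $p_1\alpha p_2=p_1\alpha\gamma\cdot p_2''$ and applying the inductive hypothesis to $(p_1\alpha''r)\,\alpha'''\,p_2''$, whose $Q^C$-factor $p_2''$ is strictly shorter than $p_2$, produces the required $\widetilde{p}_1$ and $\alpha'$; the non-membership condition needed to invoke the hypothesis passes to the congruent representative. The dual statement for $\beta\in Q^N_{1q}$ is obtained by the symmetric argument on the opposite end.

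The main obstacle I anticipate is clean bookkeeping at each inductive step: verifying that the replacement produces again a single concatenation of the form (path in $Q^B$)$\cdot$(arrow in $Q^M_{1p}$) rather than a linear combination, and correctly identifying which anti-commutative diagram applies based on the type of $s(\alpha)$ together with the fact that the companion arrows out of the opposite-sign special vertex really exist in $Q^A_1$.
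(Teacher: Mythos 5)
Your proof is correct. Part (a) follows essentially the same route as the paper: induction on $l(p_2)$, using the anti-commutative squares of $I^A$ to push the $B$-to-$C$ boundary crossing one step to the right, and exploiting that a $Q^C$-arrow lifts either to a single arrow ${}^-\gamma^-$ or (uniquely, by Lemma~\ref{algebra C}~(d3)) to a path through ordinary vertices. Your bookkeeping concern is real but resolves exactly as you sketch: after one rewrite the intermediate arrows of the lift lie in $Q^B_1$ because their endpoints are ordinary, so the output is again a single concatenation (path in $Q^B$)$\cdot$(arrow in $Q^M_{1p}$)$\cdot$(shorter $Q^C$-path), and the non-membership in $\langle I^A\rangle$ transfers to the congruent term.

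Your part (b) is a genuinely different and in fact cleaner argument than the paper's. The paper first invokes part (a) to normalize an element $x$ of the intersection into two linear combinations $\sum k_\alpha p_\alpha\alpha$ and $\sum k_\beta p_\beta\beta$, and then argues that these are simultaneously $k$-linearly independent because the paths have distinct terminals. Your version bypasses part (a) entirely: since $Q^{Sp-}_{0p}$ and $Q^{Sp-}_{0q}$ are disjoint blocks of $C$, right multiplication by the central idempotent $e_p$ acts as the identity on $B(\sum_{\alpha\in Q^M_{1p}}\alpha)C$ and as zero on $B(\sum_{\alpha\in Q^M_{1q}}\alpha)C$, forcing the intersection to vanish. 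This is shorter, does not depend on (a), and makes the direct-sum structure $M=\bigoplus_p B(\sum_{\alpha\in Q^M_{1p}}\alpha)C$ transparent; the paper's linear-independence phrasing encodes the same idea but more indirectly.
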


\begin{proof} $(a)$ We just prove the first statement. The second one is dual. Assume that $\alpha:{}i\lra j^-$ is an arrow with $i\in Q^{or}_0\cup Q^{Sp+}_0$ and $j^-\in Q^{Sp-}_0$. Note that $C_p$ is the indecomposable algebra corresponding to the set $Q^{M}_{1p}$. By Corollary \ref{hereditary}, the algebra $C_p$ is a hereditary algebra with linearly oriented quiver of type $\mathbb{A}_n$ for $n\geqslant 1$. If there is not arrow $j^-\lra k^-$ in $eAe$ with $k^-\in Q^{Sp-}_0$, then the path $p_1\alpha p_2$ lies in $\langle I^A\rangle$, where the length of $p_2$ is greater than $0$, because the bimodule $B\alpha C$ is a simple right $C$-module. 
Since $p_1\alpha p_2\notin \langle I^A\rangle$, the path $p_1\alpha p_2$ is actually $p_1\alpha e_{t(\alpha)}$.

Now, we assume that there is an arrow $\alpha_1:{}j^-\lra k^-$ in $eAe$ with $k^-\in Q^{Sp-}_0$, and the length of $p_2$ is greater than $0$. Then, it follows from the commutativity relations in $I^A$ that there exist elements $p'_1\in Q^B$ and $p_2'\in Q^C$ such that $p_1\alpha p_2+p_1'\alpha_1 p_2'\in \langle I^A\rangle$. Note that the length of $p_2'$ is indeed $l(p_2)-1$. Then, by induction on $l(p_2)$, we finally get an arrow $\alpha'\in Q^M_{1q}$ and an element $\widetilde{p}_1\in kQ^B$ such that $p_1\alpha p_2-\widetilde{p}_1\alpha'\in\langle I^A\rangle$.

$(b)$ Let $x\in B(\sum_{\alpha\in Q^M_{1p}}\alpha)C\bigcap B(\sum_{\alpha\in Q^M_{1q}}\alpha)C$. Note that the set $B(\sum_{\alpha\in Q^M_{1p}}\alpha )C$ is actually $\sum_{\alpha\in Q^M_{1p}}B\alpha C$ for all $p=1, \cdots, l$. By $(a)$, we set $x=\sum_{\alpha\in Q^{M}_{1p}}k_\alpha p_{\alpha}\alpha+ \langle I^A\rangle$ and $x=\sum_{\beta\in Q^{M}_{1q}}k_\beta p_{\beta}\beta+ \langle I^A\rangle$ with $k_{\alpha}, k_\beta\in k$ and $p_\alpha, p_\beta\in Q^B$ for all $\alpha\in Q^M_{1p}$ and $\beta\in Q^M_{1q}$. Here, we assume that both the sets $\{p_{\alpha}\alpha+\langle I^A\rangle\mid \alpha\in Q^M_{1p}\}$ and $\{p_{\beta}\beta+\langle I^A\rangle\mid \beta\in Q^M_{1q}\}$ are $k$-linearly independent.

Since $p$ and $q$ are different positive integers, the set $\{p_{\alpha}\alpha+\langle I^A\rangle, p_\beta\beta+\langle I^A\rangle\mid \alpha\in Q^M_{1p}, \beta\in Q^M_{1q}\}$ is $k$-linearly independent, because the paths $p_{\alpha}\alpha$ and $p_{\beta}\beta$ have different terminals. 
It follows from $\sum_{\alpha\in Q^{M}_{1p}}k_\alpha p_{\alpha}\alpha-\sum_{\beta\in Q^{M}_{1q}}k_\beta p_{\beta}\beta\in\langle I^A\rangle$ that $k_{\alpha}=0=k_\beta$ for all $\alpha\in Q^M_{1p}$ and $\beta\in Q^M_{1q}$. The second statement is dual. 
\end{proof}

Similar with Lemma \ref{bimodules} $(b)$, we get 
$$B(\sum_{\alpha\in Q^M_{1p}}\alpha)C\bigcap (\sum_{q\neq p} B(\sum_{\alpha\in Q^M_{1q}}\alpha)C)=\{0\}, \;C(\sum_{\alpha\in Q^N_{1p}}\alpha)B\bigcap (\sum_{q\neq p}C(\sum_{\alpha\in Q^N_{1q}}\alpha)B)=\{0\}$$
for all $p=1, \cdots, l$. It follows that 
$M=\bigoplus^l_{p=1}B(\sum_{\alpha\in Q^M_{1p}}\alpha)C$ and $N=\bigoplus^l_{q=1}C(\sum_{\alpha\in Q^N_{1q}}\alpha)B$ as a left $B$-right $C$-bimodule and a left $C$-right $B$-bimodule, respectively. 
In the following, we will describe the bimodules $M$ and $N$ explicitly. 
For all $n\geqslant 1$, let
$$1^-\stackrel{{}^-\alpha_1^-}\lra 2^-\stackrel{{}^-\alpha^-_2}\lra 3^-\stackrel{{}^-\alpha_3^-}\lra\cdots \stackrel{{}^-\alpha_{n-1}^-}\lra n^-$$
be some direct factor $C_{p}$. Note that the arrows in the quiver of $C_p$ correspond to arrows or paths in $Q^A$. Then, we should consider the following two cases. 
The first case is that there are no arrows from the ordinary vertex to the special vertices $1^+$ and $1^-$. Then the following is a sub-quiver of $Q^A$ with all squares anti-commutative. 
\begin{equation}
\xymatrix@=5mm{1^+\ar[r]^{}\ar[rdd]_(.3){}&2^+\ar[r]^{}\ar[rdd]_(.3){}&3^+\ar[r]^{}\ar[rdd]_(.3){}&\cdots\ar[r]^{}\ar[rdd]_(.3){}&s^+\ar[rd]&&&&(n-t)^+\ar[r]\ar[rdd]&\cdots\ar[r]\ar[rdd]&n^+\ar[rd]&\\
&&&&&\cdot\ar[r]&\cdots\ar[r]&\cdot\ar[ur]\ar[rd]&&&&\cdot\\
1^-\ar[r]^{{}^-\alpha_1^-}\ar[uur]^(.3){}&2^-\ar[r]^{{}^-\alpha_2^-}\ar[uur]^(.3){}&3^-\ar[r]^{{}^-\alpha_3^-}\ar[uur]&\cdots\ar[r]^{{}^-\alpha_{s-1}^-}\ar[uur]&s^-\ar[ur]&&&&(n-t)^-\ar[r]\ar[uur]&\cdots\ar[r]\ar[ruu]&n^-\ar[ur]&}\end{equation}

In this situation, the left $B$-right $C$-bimodule $B(\sum_{\alpha\in Q^M_{1p}}\alpha)C$ is given by $(\sum_{j=1}^{n-1}e_{j^+})A(\sum_{\alpha\in Q^M_{1p}}\alpha)Ae$ which is actually $(\sum_{\alpha\in Q^M_{1p}}\alpha)Ae$ for the commutativity relations in $I^A$. Thus, the $k$-vector space $(\sum_{\alpha\in Q^M_{1p}}\alpha)Ae$ is a left $B$-right $C$-bimodule. 
Note that $C_p$ is a hereditary algebra of the linearly oriented quiver of type $\mathbb{A}_n$, and the set of relations $I^A$ consists of zero relations and commutative relations. Then, the map
$$f_p:{}(\sum_{\alpha\in Q^M_{1p}}e_{t(\alpha)})Ae\lra (\sum_{\alpha\in Q^M_{1p}}\alpha)Ae$$ which sends $\sum_{\alpha\in Q^M_{1p}}k_{\alpha}e_{t(\alpha)}p_\alpha+\langle I^A\rangle$ to $\sum_{\alpha\in Q^M_{1p}}k_{\alpha}\alpha p_{\alpha}+\langle I^A\rangle$ is a $k$-linear isomorphism with $k_{\alpha}\in k$ and paths $p_{\alpha}\in Q^C$ for all $\alpha\in Q^M_{1p}$.

In fact, the $k$-vector space $(\sum_{\alpha\in Q^M_{1p}}e_{t(\alpha)})Ae$ is a left $B$-right $C$-bimodule. The right $C$-module structure is the multiplication in $A$, and the left $B$-module structure is given by multiplication by $(\sum_{\alpha\in Q^M_{1p}}\alpha)$. 
Set $b\in B$ and $x\in (\sum_{\alpha\in Q^M_{1p}}e_{t(\alpha)})Ae$. 
For the multiplication $b(\sum_{\alpha\in Q^M_{1p}}\alpha)x$, there exists $x'\in kQ^C$ such that $b(\sum_{\alpha\in Q^M_{1p}}\alpha)x-(\sum_{\alpha\in Q^M_{1p}}\alpha)x'x\in \langle I^A\rangle$ by the commutative relations in $I^A$. 
Then, the left $B$-module structure $b\cdot x$ is indeed given by $f^{-1}_p((\sum_{\alpha\in Q^M_{1p}}\alpha)x'x)$. And it is straightforward that $f_p$ is a left $B$-right $C$ isomorphism.

The second case is that there are arrows $i\lra 1^{\pm}$ with $i$ an ordinary vertex. The corresponding sub-quiver of $Q^A$ is presented as follows.
\begin{equation}
\xymatrix@=5mm{&1^+\ar[r]^{}\ar[rdd]_(.3){}&2^+\ar[r]^{}\ar[rdd]_(.3){}&3^+\ar[r]^{}\ar[rdd]_(.3){}&\cdots\ar[r]^{}\ar[rdd]_(.3){}&s^+\ar[rd]&&&&(n-t)^+\ar[r]\ar[rdd]&\cdots\ar[r]\ar[rdd]&n^+\ar[rd]&\\
i\ar[ur]\ar[rd]_{\alpha^-}&&&&&&\cdot\ar[r]&\cdots\ar[r]&\cdot\ar[ur]\ar[rd]&&&&\cdot\\
&1^-\ar[r]^{{}^-\alpha^-_1}\ar[uur]^(.3){}&2^-\ar[r]^{{}^-\alpha_2^-}\ar[uur]^(.3){}&3^-\ar[r]^{{}^-\alpha_3^-}\ar[uur]&\cdots\ar[r]^{{}^-\alpha_{s-1}^-}\ar[uur]&s^-\ar[ur]&&&&(n-t)^-\ar[r]\ar[uur]&\cdots\ar[r]\ar[ruu]&n^-\ar[ur]&}\end{equation}

Let $p_1(\sum_{\alpha\in Q^M_{1p}}\alpha)p'_1+\langle I^A\rangle$ be an element in $B(\sum_{\alpha\in Q^M_{1p}}\alpha)C$ with $p_1\in Q^B$ and $p_1'\in Q^C$. If the path $p_1$ passes through the vertex $i$, then there exists a subpath of $p_1$, which is denoted by $p_2$, and a path $p_2'\in Q^C$ such that $p_1(\sum_{\alpha\in Q^M_{1p}}\alpha)p_1'-p_2\alpha^-p_2'\in\langle I^A\rangle$. If the path $p_1$ does not pass through the vertex $i$, then the discussion is similar with the first case. In this way, the $k$-vector space $B(\sum_{\alpha\in Q^M_{1p}}\alpha)C$ can be written as $(1-e)Ae_i\otimes_ke_{1^-}Ae\oplus (\sum_{\alpha\in Q^M_{1p}}e_{t(\alpha)})Ae$ which is actually a $\mathbb{N}$-graded left $B$-right $C$-bimodule. And the grading depends on the length of $p_2$. More precisely, the $0$-th degree is the $k$-vector space $\sum_{\alpha\in Q^M_{1p}}e_{t(\alpha)}Ae$, and the $m$-th degree is a $k$-vector space generated by $p_2\alpha^-p_2'+\langle I^A\rangle$ with $p_2\alpha^-p_2'\notin \langle I^A\rangle$ and $l(p_2)=m$ for all $p_2\in Q^B, p_2'\in Q^C$ and $m\geqslant 1$.

Dually, a similar discussion still works for $C(\sum_{\alpha\in Q^N_{1q}}\alpha)B$. Thus, we get the following corollary.

\begin{Koro}\label{projective} Let $(Q, I, Sp)$ be a skew-gentle triple with $Sp\neq \emptyset$. Then,

\begin{enumerate}[label=(\alph*)]
\item
The left $B$-right $C$-bimodule $M$ is a projective right $C$-module. 
\item

The left $C$-right $B$-bimodule $N$ is a projective left $C$-module. 

\item

The $B$-$B$-bimodule homomorphism 
$f:{} M\otimes_{C}N\lra B$, which is given by the multiplication in $A$ is injective.
The image is 
$$\sum_{p=1, \cdots, l}B(\sum_{\alpha\in Q^M_{1p}}\alpha)C(\sum_{\beta\in Q^N_{1p}}\beta)B.$$
which
is an ideal of $B$, and a generating set is given as follows:
$$ \alpha^+{}^+\beta, \alpha^+{}^+\beta^+, {}^+\alpha^+{}^+\beta, {}^+\alpha^+{}^+\beta^+$$
for a relation $j\stackrel{\alpha}\lra i\stackrel{\beta}\lra k\in I$ with $i$ a special vertex. Namely, the quotient algebra $B/f(M\otimes_CN)$ is isomorphic to the quiver algebra $A(Q, I)$.

\end{enumerate}

\end{Koro}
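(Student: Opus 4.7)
The plan is to exploit the decompositions
\[
M=\bigoplus_{p=1}^{l} B\Bigl(\sum_{\alpha\in Q^M_{1p}}\alpha\Bigr)C,\qquad
N=\bigoplus_{q=1}^{l} C\Bigl(\sum_{\beta\in Q^N_{1q}}\beta\Bigr)B
\]
and the right $C$-module isomorphism
$f_p\colon \bigl(\sum_{\alpha\in Q^M_{1p}}e_{t(\alpha)}\bigr)Ae \lra \bigl(\sum_{\alpha\in Q^M_{1p}}\alpha\bigr)Ae$
constructed right before the corollary, together with the structural results Lemma \ref{algebra C}, Corollary \ref{hereditary} and Lemma \ref{bimodules}.

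For $(a)$ and $(b)$, observe that for each $\alpha\in Q^M_{1p}$ the idempotent $e_{t(\alpha)}$ lies in $Q^{Sp-}_0$, so $e_{t(\alpha)}Ae=e_{t(\alpha)}eAe=e_{t(\alpha)}C$, an indecomposable projective right $C$-module. Hence $(\sum_{\alpha\in Q^M_{1p}}e_{t(\alpha)})Ae=\bigoplus_{\alpha\in Q^M_{1p}}e_{t(\alpha)}C$ is projective as a right $C$-module, and since $f_p$ is right $C$-linear, so is $B(\sum_\alpha\alpha)C$. Taking the direct sum over $p$ yields $(a)$. Statement $(b)$ follows by the entirely dual discussion (swapping the roles of $M$ and $N$ and of source and terminal in the sub-quivers $(1)$ and $(2)$).

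For $(c)$, since $f$ is induced by multiplication in $A$, its image is automatically a $B$--$B$-sub-bimodule of $B$, i.e.\ a two-sided ideal. Using the above decompositions, the image equals $\sum_{p,q} B(\sum_{\alpha\in Q^M_{1p}}\alpha)C(\sum_{\beta\in Q^N_{1q}}\beta)B$. Since $C_1,\dots,C_l$ are distinct blocks of $C$ and the middle factor $C(\sum\beta)$ lives in $C_q$ while $(\sum\alpha)C$ lives in $C_p$, the terms with $p\ne q$ vanish, leaving the claimed sum
$\sum_{p}B(\sum_{\alpha\in Q^M_{1p}}\alpha)C(\sum_{\beta\in Q^N_{1p}}\beta)B$. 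To identify the generating set, take a typical spanning element $p_1\alpha_0\, q\, \beta_0 p_2 +\langle I^A\rangle$ with $\alpha_0\in Q^M_{1p}$, $\beta_0\in Q^N_{1p}$ and $q$ a path in $C_p$. Because $C_p$ is $k$ or a linearly oriented $\mathbb{A}_n$ whose arrows correspond via $f_p$ (and its dual) to the anti-commutative squares $(1)$--$(2)$, one propagates the $Q^{Sp-}$-stretch back to its source by repeated use of the commutative relations in $I^A$, obtaining, up to sign and an element of $\langle I^A\rangle$, an expression through a single special vertex $i$. Such expressions are $B$-multiples (on both sides) of the four listed generators $\alpha^+{}^+\beta$, $\alpha^+{}^+\beta^+$, ${}^+\alpha^+{}^+\beta$, ${}^+\alpha^+{}^+\beta^+$ attached to the relations $j\xrightarrow{\alpha}i\xrightarrow{\beta}k$ of $I$ with $i\in Q^{Sp}_0$. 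Finally, by Lemma \ref{algebra iso}, $B\cong A(Q^B,I^B)\cong A(Q,I^{or})$, and the ideal just described is precisely the image of $\langle I^{Sp}\rangle$ under $kQ\to A(Q,I^{or})$; so by the preceding lemma on $A(Q,I^{or})\to A(Q,I)$, the quotient $B/f(M\otimes_C N)$ is isomorphic to $A(Q,I)$.

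The main obstacle is the generator description in $(c)$: showing that an arbitrary path $p_1\alpha_0 q\beta_0 p_2$, in which $q$ can wander arbitrarily far inside a block $C_p$, collapses modulo $\langle I^A\rangle$ to a $B$-multiple of one of the four explicit short generators. This is where the anti-commutative squares of $(1)$ and $(2)$ and the uniqueness statements of Lemma \ref{algebra C}$(a),(d)$ are used most delicately; the computation is a finite induction on the length of $q$, analogous to the proof of Lemma \ref{bimodules}$(a)$, but carried out simultaneously on the $M$-side and the $N$-side.
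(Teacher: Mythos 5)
Your argument for parts $(a)$ and $(b)$ has a genuine gap. You invoke the isomorphism $f_p\colon (\sum_{\alpha\in Q^M_{1p}}e_{t(\alpha)})Ae \lra (\sum_{\alpha\in Q^M_{1p}}\alpha)Ae = B(\sum_{\alpha}\alpha)C$ uniformly for every block $C_p$, but $f_p$ is constructed in the paper only in the first case, namely when there is no arrow from an ordinary vertex into $1^{\pm}$, the start of the block. If there is such an arrow $i\lra 1^{\pm}$ with $i\in Q^{or}_0$ (the second case, sub-quiver $(2)$), then a path $p_2\alpha^- p'_2$ with $l(p_2)>0$ cannot be pushed through the anti-commutative squares into the $C$-side, because nothing enters $i$ from the special strip; so $B(\sum_{\alpha\in Q^M_{1p}}\alpha)C$ is strictly larger than $(\sum_{\alpha}e_{t(\alpha)})Ae$, and the paper identifies the extra summand $(1-e)Ae_i\otimes_k e_{1^-}Ae$. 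Your argument omits this case. The conclusion is still true, because $(1-e)Ae_i\otimes_k e_{1^-}Ae$ is a finite direct sum of copies of the indecomposable projective right $C$-module $e_{1^-}C$ (with multiplicity $\dim_k(1-e)Ae_i$), but the proof must explicitly treat this second case to be complete; the same remark applies dually to $N$ in $(b)$.

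Part $(c)$ is substantively correct and coincides with the paper's route: the vanishing of the mixed terms $B(\sum_{\alpha\in Q^M_{1p}}\alpha)C(\sum_{\beta\in Q^N_{1q}}\beta)B$ for $p\neq q$ uses the block decomposition of $C$, and the reduction of $B(\sum_{\alpha}\alpha)C$ to $B(\sum_{\alpha}\alpha)$ (and dually $C(\sum_\beta\beta)B$ to $(\sum_\beta\beta)B$) then exhibits the four listed length-two generators corresponding to relations in $I^{Sp}$. However, you re-derive this reduction by a fresh induction on the length of the $C_p$-path $q$; the paper instead directly cites Lemma \ref{bimodules}$(a)$, which already performs exactly that induction once and for all. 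Using the lemma as stated is both cleaner and avoids duplicating the one technically delicate step. Your final appeal to Lemma \ref{algebra iso} and the lemma relating $A(Q,I^{or})$ to $A(Q,I)$ to conclude $B/f(M\otimes_C N)\cong A(Q,I)$ is fine.
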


\begin{proof} $(a)$ By Lemma \ref{bimodules} and the discussion above, $M$ has direct summands of the form $(\sum_{\alpha\in Q^M_{1p}}e_{t(\alpha)})Ae$ or $(1-e)Ae_i\otimes_ke_{1^-}Ae\oplus (\sum_{\alpha\in Q^M_{1p}}e_{t(\alpha)}Ae)$ as left $B$-right $C$-bimodules. Obviously, they are projective right $C$-modules for $\{1^-, t(\alpha)\mid \alpha\in Q^M_{1p}\}$ being a subset of $Q^{Sp-}_0$. Similarly, we get $(b)$.

$(c)$ By Lemma \ref{bimodules} $(a)$, the left $B$-right $C$-bimodule $B(\sum_{\alpha\in Q^M_{1p}}\alpha)C$ can be written as $B(\sum_{\alpha\in Q^M_{1p}}\alpha)$. Dually, the left $C$-right $B$-bimodule $C(\sum_{\beta\in Q^N_{1p}}\beta)B$ is $(\sum_{\beta\in Q^N_{1p}}\beta)B$.

Now, we write $M_p=B(\sum_{\alpha\in Q^{M}_{1p}}\alpha)$ and $N_p=(\sum_{\beta\in Q^N_{1p}}\beta)B$ for short. By Lemma \ref{bimodules} $(b)$, we have
$M=\bigoplus^l_{p=1}M_p$ and $N=\bigoplus^l_{p=1}N_p$. Note that $C$ is the direct product $C_1\times C_2\times \cdots \times C_l$. And $M_p$ and $N_p$ are a right $C_p$-module and a left $C_p$-module, respectively.  Thus, we have $M_p\otimes_CN_q=0$ for all $1\leqslant p\neq q\leqslant l$. It follows $M\otimes_CN=\bigoplus_{p=1}^lM_p\otimes_{C_p}N_p$. So, the  image of $f$ is
$$\sum_{p=1, \cdots, l}B(\sum_{\alpha\in Q^M_{1p}}\alpha)C(\sum_{\beta\in Q^N_{1p}}\beta)B,$$
 and the generating set of $f(M\otimes_CN)$ as a $B$-$B$-bimodule is 
$\{ \alpha^+{}^+\beta, \alpha^+{}^+\beta^+, {}^+\alpha^+{}^+\beta, {}^+\alpha^+{}^+\beta^+\mid j\stackrel{\alpha}\lra i\stackrel{\beta}\lra k\in I^{Sp}\}.$

What is left to show is $f_p:{}M_p\otimes_{C_p}N_p\lra B$ is injective for all $p=1, \cdots, l$. 
By Lemma \ref{algebra C} $(e)$, it suffices to prove that any path $i\stackrel{\alpha_1}\lra \cdots \stackrel{\alpha_n}\lra j$ does not lie in $\langle I^A\rangle$
with $i, j\in Q^{or}_0\cup Q^{Sp+}_0$ and $t(\alpha_i)\in Q^{Sp+}_0$ for all $i=1, \cdots, n-1$. This is obvious by the zero relations in $I^A$.\end{proof}

\begin{Prop}\label{skew-gentle sd} Let $(Q, I, Sp)$ be a skew-gentle triple with $Sp\neq\emptyset$. Then
\begin{enumerate}[label=(\alph*)]
\item
The set $A E_{22}A$ is a stratifying ideal of $A$.

\item
  $\pd_{A^e}(A E_{22}A)<\infty$.
  
  \item
  The two algebras $A/A E_{22}A$ and $A(Q, I)$ are isomorphic.

\end{enumerate}

\end{Prop}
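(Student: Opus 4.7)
The plan is to attack the three parts in order, invoking Lemma \ref{KN09} for (a), a bimodule bar-resolution argument for (b), and a short matrix computation for (c).

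For (a) the key inputs are already available. By Corollary \ref{projective}(a,b), the bimodule $M$ is projective as a right $C$-module and $N$ is projective as a left $C$-module. Consequently $\Tor_n^{C}(M,N)=0$ for every $n\ge 1$, so Lemma \ref{KN09} directly yields that $AE_{22}A$ is a stratifying ideal of $A$; in particular the canonical map $Ae\otimesL_{eAe}eA\to AE_{22}A$ is an isomorphism in $\D{A^e}$, and by the same projectivity it is identified with the usual tensor product $Ae\otimes_{eAe}eA$.

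For (b) I would first use the identification from (a) and then pick a bounded projective bimodule resolution $P^{\bullet}\to eAe$ over $(eAe)^{e}$. Such a bounded resolution exists because, by Corollary \ref{hereditary}, the algebra $C=eAe$ is a direct product of copies of $k$ and of path algebras of linearly oriented $\mathbb{A}_n$-quivers, and each of these factors admits the standard length-one bimodule resolution
$$0\to\bigoplus_{\alpha}C e_{s(\alpha)}\otimes_{k}e_{t(\alpha)}C\to\bigoplus_{i}Ce_{i}\otimes_{k}e_{i}C\to C\to 0,$$
so $\pd_{(eAe)^{e}}(eAe)\le 1$. Applying the functor $Ae\otimes_{eAe}(-)\otimes_{eAe}eA$ to $P^{\bullet}$ preserves exactness (again by the one-sided $C$-projectivity of $Ae$ and $eA$) and produces a bounded resolution of $AE_{22}A$. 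It then remains to verify that the resulting terms are projective over $A^{e}$: a summand of $(eAe)\otimes_{k}(eAe)$ is sent to a summand of $Ae\otimes_{k}eA$, which is itself an $A^{e}$-direct summand of $A\otimes_{k}A=A^{e}$ because $Ae$ is a projective left $A$-module and $eA$ is a projective right $A$-module. Altogether this gives $\pd_{A^{e}}(AE_{22}A)\le\pd_{(eAe)^{e}}(eAe)<\infty$.

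For (c), the Peirce matrix form of $A$ together with the explicit description of $AE_{22}A$ coming from (a) gives an algebra isomorphism $A/AE_{22}A\cong B/f(M\otimes_{C}N)$, since the quotient kills everything outside the $(1,1)$-corner and reduces the corner modulo $f(M\otimes_{C}N)$. The right-hand side is then identified with $A(Q,I)$ by Corollary \ref{projective}(c), completing (c).

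The main obstacle is the second half of (b): checking that tensoring a projective $(eAe)^{e}$-resolution of $eAe$ on the left by $Ae$ and on the right by $eA$ actually yields a projective resolution over $A^{e}$. The exactness part hinges on the one-sided $C$-projectivity of $M$ and $N$ recorded in Corollary \ref{projective}, and the projectivity part rests on the elementary but essential observation that $Ae\otimes_{k}eA$ is an $A^{e}$-summand of $A\otimes_{k}A$. Once both points are in hand the three parts fit together into a single short argument.
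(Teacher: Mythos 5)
Your treatment of (a) and (c) coincides with the paper's: part (a) is exactly Corollary \ref{projective}(a,b) combined with Lemma \ref{KN09}, and part (c) reads off the quotient from the Peirce matrix form of $AE_{22}A$ and invokes Corollary \ref{projective}(c). For part (b) you take a genuinely different, more hands-on route than the paper. The paper follows Koenig--Nagase: starting from the isomorphism $AE_{22}A\cong AE_{22}\otimes_{C}E_{22}A$ of $(a)$, it runs a chain of adjunctions to get $\Ext^{n}_{A^{e}}(AE_{22}A, X)\cong\Ext^{n}_{C^{e}}(C, E_{22}XE_{22})$ for every $A^{e}$-module $X$, and then quotes Happel's identity $\pd_{C^{e}}C=\gldim C\leqslant 1$. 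You instead take a bounded projective $C^{e}$-resolution of $C$ (of length $\le 1$ for the same reason) and apply $Ae\otimes_{C}(-)\otimes_{C}eA$, using the one-sided $C$-projectivity of $Ae=M\oplus C$ and $eA=N\oplus C$ to keep the complex exact and the observation that $Ae_{i}\otimes_{k}e_{j}A$ is an $A^{e}$-direct summand of $A\otimes_{k}A$ to see that the resulting terms are $A^{e}$-projective. Both arguments hinge on the same two facts --- the stratifying-ideal isomorphism and $\pd_{C^{e}}C\leqslant 1$ --- so the content is the same; your version is more explicit and self-contained, while the paper's adjunction calculation avoids the term-by-term projectivity check. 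Either way one gets the sharper estimate $\pd_{A^{e}}(AE_{22}A)\leqslant 1$, which is what the paper records.
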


\begin{proof} $(a)$ It is immediate from Corollary \ref{projective} and Lemma \ref{KN09}.

$(b)$ This follows essentially from the proof of \cite[Proposition 3.3]{Koenig2009}. For convenience of the readers, we include the proof. By $(a)$, $A E_{22}A$ is a stratifying ideal of $A$. Thus, $A E_{22}A$ is isomorphic to $A E_{22}\otimes_{C}E_{22}A$ as $A$-$A$-bimodules. Let $X$ be a left $A^e$-module. We have
$$\begin{array}{ll}
\Ext^n_{A^e}(A E_{22}A, X)&\cong \Ext^n_{A^e}(A E_{22}\otimes_{C}E_{22}A, X)\\
&\cong \Ext^n_{A\otimes_kC^{\opp}}(A E_{22}, \Hom_{A^{\opp}}(E_{22}A, X))\\
&\cong \Ext^n_{A\otimes_kC^{\opp}}(A E_{22}\otimes_{C}C, XE_{22})\\
&\cong \Ext^n_{C^{e}}(C, E_{22}XE_{22}).
\end{array}
$$
By \cite[Section 1.5]{Happel1989d}, we have $\gldim (C)=\pd_{C^e}C$. So, it follows from Corollary \ref{hereditary} that the projective dimension of $A E_{22}A$ as a left $A^e$-module is not greater than $1$. 

$(c)$ By Corollary \ref{projective} $(c)$.
\end{proof}

{\em Proof of Theorem \ref{main result}:} It is a consequence of Proposition \ref{skew-gentle sd} and Corollary \ref{hereditary}. $\square$

\medskip

 If $Sp=\emptyset$, then the skew-gentle algebra is just a gentle algebra. Now, we assume $Sp\neq\emptyset$.
By Theorem \ref{main result}, we get a recollement (\ref{recollement}) with $\gldim(C)\leqslant 1$ and $A/A E_{22}A\cong A(Q, I)$. It follows from Proposition \ref{1} that (\ref{recollement}) can be restricted to $\Db{\rm mod}$.

\medskip

{\em Proof of Corollary \ref{coro}}
 $(a)$ Recall that the singularity category $\Dsg{A}$ is the Verdier quotient of $\Db{A}$ with respect to $\Kb{\pmodcat{A}}$.
 Since (\ref{recollement}) can be restricted to $\Db{\rm mod}$ and $\Kb{\rm proj}$, by \cite[Lemma 1.2]{Orlov2004}, the adjoint pair $(i^*, i_*)$ induces an adjoint pair $(\overline{i^*}, \overline{i_*})$ between the singularity categories $\Dsg{A/A E_{22}A}$ and $\Dsg{A}$. It follows from Theorem \ref{main result} $(c)$ that $(\overline{i^*}, \overline{i_*})$ is a triangle equivalence.

$(b)$ By \cite{Geis2005} and \cite[Theorem 3.3]{ChenLu2017}, all skew-gentle algebras are Gorenstein. 
It follows from \cite{Dalezios2021} that $A(Q, I, Sp)$ and $A(Q, I)$ are singularly equivalent of Morita type with level.

$(c)$ Assume that $A(Q, I, Sp)$ is indecomposable self-injective. Then, by \cite[Theorem 3.6]{CK2017}, the skew-gentle algebra is derived equivalent to $A(Q, I)$ or $C$. This is a contradiction, because derived equivalences preserves the number of isomorphism classes of simple modules. So, we get $Sp=\emptyset$, and $A(Q, I)$ is an indecomposable selfinjective gentle algebra. 

Assume that $A(Q, I)$ is an indecomposable selfinjective gentle algebra. It follows 
$\gldim(A(Q, I))=0$ or $\infty$. If $\gldim(A(Q, I))=0$, then $A(Q, I)$ is just $k$ for $A(Q, I)$ being indecomposable. Now, we suppose that
$\gldim(A(Q, I))=\infty$. Note that the singularity category $\Dsg{A(Q, I)}$ is not trivial, and is given by the Gorenstein projective (not projective) modules. By \cite[Theorem 2.5]{Kalck2015}, $A(Q, I)$ has a full cycle of relations, that is there exists a oriented cycle $\alpha_0\alpha_1\cdots\alpha_n$ such that $s(\alpha_0)=t(\alpha_n)$ and $\alpha_i\alpha_{i+1}, \alpha_n\alpha_0\in I$ for all $i=0, \cdots, n-1$. Since $A(Q, I)$ is selfinjective, any indecomposable projective module over $A(Q, I)$ is injective. So the socle of any indecomposable projective module is simple. Thus, there are no other arrows starting (or ending) at $s(\alpha_i)$ for all $i=0, \cdots, n$. Hence, $A(Q, I)$ is a selfinjective Nakayama algebra with radical square zero.

$(d)$ By \cite[Corollary 6.8]{AKLY2017} (or \cite[Corollary 1.3]{ChenXi2016a}), the K-group of $A(Q, I, Sp)$ is the direct sum of K-groups of $A(Q, I)$ and hereditary algebras of type $\mathcal{A}_n$ for all $n\geqslant 1$. Note that the derived categories of hereditary algebras of type $\mathcal{A}_n$ admit stratifications which are given by sink or source. Then, we use \cite[Corollary 6.8]{AKLY2017} again, and we get that the K-groups of a hereditary algebra of type $\mathcal{A}_n$ are isomorphic to the direct sum of $n$ copies of
the K-groups of the ground field $k$.

\medskip

{\em Proof of Corollary \ref{coro2}}
$(a)$ By Proposition \ref{skew-gentle sd} $(b)$, we have $i^!(A)\in\Kb{\pmodcat{A/A E_{22}A}}$. By Proposition \ref{1}, the recollement can be restricted to $\Kb{\rm proj}$. Due to Proposition \ref{2}, the skew-gentle algebra $A(Q, I, Sp)$ is Gorenstein.

 $(b)$ By $(a)$, skew-gentle algebras are Gorenstein. It is equivalent to saying that $\Omega^d(X)$ is Gorenstein projective for all $X\in \modcat{A(Q, I, Sp)}$, where $d$ is the injective dimension of left regular module. And the projective dimension of a Gorenstein projective module is $0$ or $\infty$. Hence, the finitistic dimension conjecture is true for all skew-gentle algebras.

 $(c)$ By $(a)$ and Proposition \ref{2}, the recollement {\ref{2}} can be restricted to $\Kb{{\rm proj}}$ and $\Kb{{\rm inj}}$. Then, the recollement (\ref{2}) can be extended to a ladder of height $\infty$. It follows from \cite[Theorem 1.2]{CHQW2023} that Auslander and Reiten's conjecture holds for all skew-gentle algebras.

$(d)$ By \cite{ChenLiWang2020}, Keller's conjecture is invariant under singular equivalences of Morita type with level, and it holds for all gentle algebras. Due to Corollary \ref{coro2} $(b)$, Keller's conjecture holds for all skew-gentle algebras. $\square$

\medskip

{\bf Acknowledgements.} The author wants to express her gratitude to Steffen Koenig for so many valuable suggestions, and to all colleagues in Stuttgart for hospitality and discussions.

\footnotesize{
\providecommand{\bysame}{\leavevmode\hbox to3em{\hrulefill}\thinspace}
\providecommand{\noopsort}[1]{}
\providecommand{\mr}[1]{\href{http://www.ams.org/mathscinet-getitem?mr=#1}{MR~#1}}
\providecommand{\zbl}[1]{\href{http://www.zentralblatt-math.org/zmath/en/search/?q=an:#1}{Zbl~#1}}
\providecommand{\jfm}[1]{\href{http://www.emis.de/cgi-bin/JFM-item?#1}{JFM~#1}}
\providecommand{\arxiv}[1]{\href{http://www.arxiv.org/abs/#1}{arXiv~#1}}
\providecommand{\doi}[1]{\url{https://doi.org/#1}}
\providecommand{\MR}{\relax\ifhmode\unskip\space\fi MR }
% \MRhref is called by the amsart/book/proc definition of \MR.
\providecommand{\MRhref}[2]{%
  \href{http://www.ams.org/mathscinet-getitem?mr=#1}{#2}
}
\providecommand{\href}[2]{#2}
}

%\bibliographystyle{aomalpha}

%\footnotesize{\bibliography{../../refData}}

\bigskip

\noindent
Yiping Chen, School of Mathematics and Statistics, Wuhan University, Wuhan, Hubei, 430072,  China

\medskip
\noindent
Hubei Key Laboratory of Computational Science (Wuhan University), Wuhan, Hubei, 430072, China

{\tt Email: ypchen@whu.edu.cn}

\end{document}